\newtheorem{thm}[subsection]{Theorem}
\newtheorem{defn}[subsection]{Definition}
\newtheorem{claim}[subsection]{Claim}
\newtheorem{lemma}[subsection]{Lemma}
\newtheorem{remark}[subsection]{Remark}
\newtheorem{conv}[subsection]{Convention}
\theoremstyle{definition}
\newtheorem{example}[subsection]{Example}
\newtheorem{question}[subsection]{Question}
\newcommand{\R}{\mathbb R}
\newcommand{\Q}{\mathbb Q}
\newcommand{\Z}{\mathbb Z}
\newcommand{\C}{\mathbb C}
\newcommand{\db}{\bar{\partial}}
\DeclareMathOperator{\Aut}{Aut}
\DeclareMathOperator{\colim}{colim}
\DeclareMathOperator{\Map}{Map}
\DeclareMathOperator{\Hom}{Hom}
\DeclareMathOperator{\Symp}{Symp}
\DeclareMathOperator{\BSymp}{BSymp}
\DeclareMathOperator{\ESymp}{ESymp}
\DeclareMathOperator{\BAut}{BAut}
\DeclareMathOperator{\BGL}{BGL}
\DeclareMathOperator{\GL}{GL}
\DeclareMathOperator{\BO}{BO}
\DeclareMathOperator{\BU}{BU}
\DeclareMathOperator{\BSpin}{BSpin}
\DeclareMathOperator{\BSO}{BSO}
\DeclareMathOperator{\BtU}{B\tilde{U}}
\DeclareMathOperator{\U}{U}
\DeclareMathOperator{\EU}{EU}
\DeclareMathOperator{\Spin}{Spin}
\DeclareMathOperator{\SO}{SO}
\DeclareMathOperator{\UO}{U/O}
\DeclareMathOperator{\USO}{U/SO}
\DeclareMathOperator{\tUSpin}{\tilde{U}/Spin}
\DeclareMathOperator{\tU}{\tilde{U}}
\DeclareMathOperator{\Or}{O}
\DeclareMathOperator{\B}{B\tilde{O}}
\DeclareMathOperator{\MO}{MO}
\DeclareMathOperator{\MSO}{MSO}
\DeclareMathOperator{\MSpin}{MSpin}
\DeclareMathOperator{\KZtt}{K(\mathbb{Z}/2,2)}
\DeclareMathOperator{\KZto}{K(\mathbb{Z}/2,1)}
\DeclareMathOperator{\BZf}{B(\mathbb{Z}/4)}
\DeclareMathOperator{\E}{E}
\DeclareMathOperator{\Fr}{E}
\DeclareMathOperator{\Fm}{F}
\DeclareMathOperator{\BSo}{BS^1}
\DeclareMathOperator{\Hrt}{H(\Z/2)}
\DeclareMathOperator{\Jb}{J}
\newfont{\german}{eufm10}
 \DeclareMathOperator{\End}{End}
\newcommand\qu{/\kern-.7ex/}
\begin{document}
\pagestyle{plain}

\title
{The Stable Symplectic Category and Quantization}
\author{Nitu Kitchloo}
\address{Department of Mathematics, Johns Hopkins University, Baltimore, USA}
\email{nitu@math.jhu.edu}
\thanks{Nitu Kitchloo is supported in part by NSF through grant DMS
  1005391.}

\date{\today}

\dedicatory{To Graeme, Gunnar, Ib, Ish and Ralph with best wishes for many more creative and healthy years.}

{\abstract

\noindent
We study a stabilization of the symplectic category introduced by A. Weinstein as a domain for the geometric quantization functor. The symplectic category is a topological category with objects given by symplectic manifolds, and morphisms being suitable lagrangian correspondences. The main drawback of Weinstein's symplectic category is that composition of morphisms cannot always be defined. Our stabilization procedure rectifies this problem while remaining faithful to the original notion of composition. The stable symplectic category is enriched over the category of spectra (in particular, its morphisms can be described as infinite loop spaces representing the space of immersed lagrangians), and it possesses several appealing properties that are relevant to deformation, and geometric quantization.}
\maketitle

\tableofcontents
\section{Introduction}

\noindent
Motivated by earlier work by Guillemin and Sternberg \cite{GS}, A. Weinstein \cite{W,W2} introduced the symplectic category as a domain category for constructing the (yet to be completely defined) geometric quantization functor. The variant of Weinstein's symplectic category we consider is a topological category with objects given by symplectic manifolds, and morphisms between two symplectic manifolds $(M,\omega)$ and $(N,\eta)$ are defined as lagrangian immersions to $\overline{M} \times N$, where the conjugate symplectic manifold $\overline{M}$ is defined by the pair $(M,-\omega)$. Geometric quantization is an attempt at constructing a canonical representation of this category. There are many categories closely related to the symplectic category (for example the Fukaya 2-category), that we shall not consider in this document. 

\medskip
\noindent
Composition of two lagrangians $L_1 \looparrowright \overline{M} \times N$ and $L_2 \looparrowright \overline{N} \times K$ in the symplectic category is defined as $L_1 \ast L_2$ given by the cartesian product $L_1 \times_N L_2 \longrightarrow \overline{M} \times N$. Alternatively, one may define $L_1 \ast L_2$ as the intersection of $L_1 \times L_2$ with $ \overline{M} \times \Delta(N) \times K$ inside $\overline{M} \times N \times \overline{N} \times K$, where $\Delta(N) \subset N \times \overline{N}$ is the diagonal submanifold. The first observation to make is that composition as we have defined above does not always give rise to a lagrangian immersion. For composition to yield a lagrangian immersion, the intersection that is used to define it must be transverse. In particular, the symplectic category fails to be a viable category.

\medskip
\noindent
One way to fix the problem with the failure of composition was introduced by Wehrheim and Woodward \cite{WW} where they consider the free category generated by all the correspondences, modulo the obvious relation if the pair of composable morphisms is transverse. Here, we describe another method of extending the symplectic category into an honest category. This document is a more detailed version of \cite{N}. We introduce a moduli space of stabilized lagrangian immersions in a symplectic manifold of the form $\overline{M} \times N$ \footnote{under the assumption of monotonicity. Otherwise, one has the space of totally real immersions.}. This moduli space can be described as the infinite loop space corresponding to a certain Thom spectrum. Taking this as the space of morphisms defines the {\em Stable Symplectic Category} that is naturally enriched over the monoidal category of spectra (under smash product). Composition in this stable symplectic category remains faithful to the original definition introduced by Weinstein. There are variants of the stable symplectic category known as the stable oriented and the stable metaplectic category.

\medskip
\noindent
Geometrically, stabilization of Weinstein's symplectic category can be seen as ``inverting the symplectic manifold $\C$". In other words, we introduce a relation on the symplectic category that identifies two symplectic manifolds $M$ and $N$ if $M \times \C^k$ becomes equivalent to $N \times \C^k$ for some $k$. This procedure of stabilization is motivated by applications we have in mind. More precisely, we are interested in exploring the existence of a ``derived geometric quantization functor" that takes values in a suitable category of virtual Hilbert spaces (for example Kasparov's Fredholm modules \cite{K}). Since the quantization of the manifold $\C$ is the unique irreducible representation of the Weyl algebra of differential operators on $\R$, it is reasonable to expect that the quantization of a symplectic manifold $M$ is equivalent in a derived sense to the quantization of $M \times \C$. We shall go deeper into this application in a forthcoming document \cite{N2}. In particular, we shall consider a natural extensions of the stable symplectic (or metaplectic) category from the standpoint of geometric quantization that involves extending coefficients in our category by a flat line bundle. We call this the {\em Stable Symplectic Category of Symbols}. As before, there will be variants called the stable oriented and stable metaplectic category of symbols. By linearizing the stable metaplectic category of symbols along the $\hat{A}$-genus and expressing the result in terms of Kasparov's bivariant K-theory, one may categorify geometric quantization. Details will appear in \cite{N2}. See section \ref{meta} for more discussion on this subject.

\medskip
\noindent
Having stabilized, we notice the appearance of structure relevant to the categorical aspects of quantization. To begin with, we observe that there is an intersection pairing between stable lagrangians inside a symplectic manifold and those inside its conjugate (see \ref{D}). This allows us to construct algebraic representations of the stable symplectic category (see \ref{rep}). Endomorphisms of an object $(M,\omega)$ in our category can be seen as a homotopical notion of the ``algebra of observables". Indeed, in section \ref{comp} we show that this algebra is an $A_\infty$-deformation of an $E_\infty$-algebra. In section \ref{stab} we also construct a canonical representation of the symplectomorphism group of a compact symplectic manifold $M$ in this algebra.

\medskip
\noindent
Another direction that we will pursue in a later document \cite{KM} is the question of the {\em Motivic Galois Group} of the stable symplectic category. We begin with the observation that there exists a canonical monoidal functor from the stable symplectic category into the category of modules over a certain ``coefficient spectrum'' $\Omega$ that is naturally associated to the stable symplectic category (see section \ref{hcat}). By extending coefficients to other algebras over $\Omega$, one has a family of such functors, and one may ask for the structure of the Motivic Galois group of monoidal automorphisms of this family. In \cite{KM} we answer this question, and draw a parallel between the Motivic Galois group and the Gothendieck Teichm\"uller group \cite{KO} (also see question \ref{GT}). 

\medskip
\noindent
This document is organized as follows: In sections \ref{two}, \ref{hcat} we construct a category of stabilized symplectic manifolds enriched over the homotopy category of spectra. We call this the {\em Stable Symplectic Homotopy Category}. These sections are intended to establish a transparent connection between geometry, and the homotopical objects we use to represent it. The next two sections: \ref{four} and \ref{stab} go deeper into the structure of the stable symplectic homotopy category. Later sections \ref{A-mor} and \ref{comp} aim to make the symplecitic category rigid by using the (arguably opaque) language of parametrized $\mbox{S}$-modules. These sections allows us to give the stable symplectic category the foundation of an $A_\infty$-category enriched over the honest category of structured spectra.  The reader unfamiliar with the language of $A_\infty$-categories or structured spectra may wish to ignore those sections. 

\smallskip
\noindent
Before we begin, we would like to thank Gustavo Granja for his interest in this project and his hospitality at the IST (Lisbon) where this project started. We also thank Jack Morava for his continued interest and encouragement, and for sharing \cite{M} with us. In addition, we would like to thank David Ayala, John Klein, John Lind and Alan Weinstein for helpful conversations related to various parts of this project. And finally, we would like to acknowledge our debt to Peter Landweber for carefully reading an earlier version of this manuscript and providing several very helpful suggestions.

\section{Stabilized lagrangian immersions} \label{two}

\noindent
In this section we will describe the stabilization procedure that we will apply to the symplectic category in later sections. To begin with consider a symplectic manifold $(M^{2m},\omega)$. We fix a compatible almost complex structure $J$ on $M$. This endows the tangent bundle $\tau$ of $M$ with a unitary structure. Given an injection of unitary bundles $j : TM \longrightarrow M \times \C^\infty$, taking values in some $M \times \C^k$ for some large $k$, the complex Gauss map for $j$ yields a canonical map $\tau : M \longrightarrow \BU(m)$ that classifies the complex tangent bundle of $M$, and where our model of $\BU(m)$ is given by all complex $m$-planes in $\C^{\infty}$. 

\smallskip
\noindent
To motivate our constructions, let us also assume for the moment that the cohomology class of $\omega$ is a scalar multiple of the first Chern class of $M$. This assumption gives a geometric context to our construction, though it is not technically necessary (see remark \ref{totreal} for an explanation). 

\smallskip
\noindent
Given our model for $\BU(m)$ as the space of $m$-dimensional complex planes in $\C^{\infty}$, the universal space $\EU(m)$ can be identified with the space of all orthonormal complex $m$-frames in $\C^{\infty}$. We will choose $\EU(m)/\Or(m)$ as our model for $\BO(m)$. So we have a bundle $\BO(m) \longrightarrow \BU(m)$ with fiber $\U(m)/\Or(m)$. Consider the pullback diagram:
\[
\xymatrix{
{\mathcal{G}}(\tau)       \ar[d] \ar[r]^{\zeta \quad} & \BO(m) \ar[d] \\
M      \ar[r]^{\tau \quad} & \BU(m).
}
\]
Notice that the space ${\mathcal{G}}(\tau)$ has an intrinsic description as the bundle of lagrangian grassmannians on the tangent bundle of $M$. 
\begin{defn}\label{str}
Now let $X^m$ be an arbitrary $m$-manifold, and let $\zeta$ be an $m$-dimensional real vector bundle over a space $B$. By a $\zeta$-structure on $X$ we shall mean a bundle map $\tau(X) \longrightarrow \zeta$, where $\tau(X)$ is the tangent bundle of $X$.
\end{defn}

\begin{claim} \label{moduli}
The space of lagrangian immersions of $X$ into $M$ is homotopy equivalent to the space of $\zeta$-structures on the tangent bundle of $X$, where $\zeta$ is the vector bundle over ${\mathcal{G}}(\tau)$ defined by the above pullback.  
\end{claim}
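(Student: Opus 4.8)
The plan is to read the statement as a repackaging of Gromov's $h$-principle for Lagrangian immersions, once the formal (infinitesimal) data of such an immersion has been matched with the notion of a $\zeta$-structure. First I would set up the intermediate object: a \emph{formal Lagrangian immersion} of $X$ into $(M,\omega)$ is a pair $(h,F)$ where $h\colon X\to M$ is continuous and $F\colon\tau(X)\to h^{*}TM$ is a bundle map that on each fibre $\tau(X)_{x}$ is a linear isomorphism onto a Lagrangian subspace of $T_{h(x)}M$. Differentiating a genuine Lagrangian immersion $f\colon X\looparrowright M$ gives the formal one $(f,df)$, and this defines a comparison map from the space of Lagrangian immersions of $X$ into $M$ to the space of formal Lagrangian immersions. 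The $h$-principle for Lagrangian immersions (Gromov; Lees in the case $M=\C^{m}$) asserts that this map is a weak homotopy equivalence; note that since $\dim M=2m>m=\dim X$ the underlying immersion relation needs no openness hypothesis on $X$, and it is the parametric form of the $h$-principle that is needed here, in order to compare spaces rather than merely sets of regular-homotopy classes. So the task reduces to identifying, up to homotopy, the space of formal Lagrangian immersions with the space of $\zeta$-structures on $\tau(X)$.

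Next I would unwind the models to see what $\zeta$ is. A point of $\BU(m)$ is a complex $m$-plane $V\subset\C^{\infty}$; a point of $\BO(m)=\EU(m)/\Or(m)$ lying over it is a real $m$-plane $\ell$ with $\ell\oplus i\ell=V$, and the universal $m$-plane bundle over $\BO(m)$ has fibre $\ell$ at that point. Pulling back along $\tau\colon M\to\BU(m)$ --- which is built from the chosen unitary embedding $j\colon TM\hookrightarrow M\times\C^{\infty}$ --- one sees that a point of ${\mathcal{G}}(\tau)$ over $p\in M$ is a real $m$-plane $\ell\subset T_{p}M$ with $\ell\cap i\ell=0$, and because $J$ is $\omega$-compatible, a totally real $m$-plane in the complex $m$-dimensional space $(T_{p}M,J)$ is exactly a Lagrangian plane for $\omega$. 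Hence ${\mathcal{G}}(\tau)$ is the bundle of Lagrangian Grassmannians of $TM$, as already observed, and $\zeta$ is its tautological sub-bundle of $\pi^{*}TM$, with fibre $\ell$ over $(p,\ell)$, where $\pi\colon{\mathcal{G}}(\tau)\to M$ is the projection. It is worth stressing that the monotonicity assumption $[\omega]\propto c_{1}(M)$ is never used at this point --- only the compatible $J$ is --- which is the content of remark \ref{totreal}.

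With $\zeta$ described this way, the remaining identification is purely formal. A $\zeta$-structure on $X$, i.e. a bundle map $\tau(X)\to\zeta$, covers a map $g\colon X\to{\mathcal{G}}(\tau)$ and restricts to an isomorphism $\varphi\colon\tau(X)\xrightarrow{\ \cong\ }g^{*}\zeta$ on fibres; setting $h=\pi\circ g$ and composing $\varphi$ with the tautological inclusion $g^{*}\zeta\hookrightarrow g^{*}\pi^{*}TM=h^{*}TM$ produces a formal Lagrangian immersion $(h,\varphi)$, whose image is Lagrangian by the previous paragraph. Conversely, a formal Lagrangian immersion $(h,F)$ determines the lift $g(x)=(h(x),F(\tau(X)_{x}))\in{\mathcal{G}}(\tau)$ of $h$, and then $F$ is itself an isomorphism $\tau(X)\to g^{*}\zeta$. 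These two assignments are mutually inverse and continuous for the compact-open topologies, so they exhibit a homeomorphism between the space of formal Lagrangian immersions and the space of $\zeta$-structures on $\tau(X)$; combined with the first paragraph this proves the claim.

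The geometric content is therefore entirely concentrated in the first step: the $h$-principle does all the work, and the only points that require care are (i) to invoke it in its parametric version, so that the conclusion is a statement about spaces of immersions, and (ii) to note that the weak homotopy equivalence it furnishes upgrades to an honest homotopy equivalence, since all the mapping spaces in play have the homotopy type of CW complexes. The identification of $\zeta$ as the tautological Lagrangian sub-bundle and the matching of formal solutions with $\zeta$-structures are just bookkeeping with the chosen models and present no difficulty.
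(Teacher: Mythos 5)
Your overall strategy -- identify $\zeta$-structures with formal Lagrangian immersions, then invoke the parametric $h$-principle -- is exactly the paper's, and your unwinding of the models (${\mathcal{G}}(\tau)$ as the Lagrangian Grassmannian bundle, $\zeta$ as its tautological sub-bundle, the bijection between $\zeta$-structures and formal solutions) is correct and matches the intended reading of Definition \ref{str}.

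However, there is a genuine gap in the middle step, and it is precisely the point where you assert that ``the monotonicity assumption is never used.'' The $h$-principle for Lagrangian immersions is \emph{not} the statement that the derivative map from genuine to formal Lagrangian immersions is a weak equivalence. A Lagrangian immersion $f\colon X\looparrowright M$ satisfies $f^{*}\omega=0$ as a form, hence $f^{*}[\omega]=0$ in $H^{2}(X;\R)$; this cohomological condition is a nontrivial obstruction on the underlying map and is part of the hypotheses of the theorem (see \cite{EM}, the Lagrangian immersion $h$-principle): the weak equivalence is onto the subspace of formal solutions $(h,F)$ whose base map satisfies $h^{*}[\omega]=0$. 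A $\zeta$-structure imposes no such constraint by itself, so without further input the space of $\zeta$-structures is strictly larger (up to homotopy) than the space of Lagrangian immersions. This is exactly where the paper uses monotonicity: a $\zeta$-structure factors the classifying map of $h^{*}TM$ through $\BO(m)$, the assumption $[\omega]\propto c_{1}$ makes $h^{*}[\omega]$ the pullback of a class in $H^{2}(\BO(m);\R)=0$, and therefore every formal solution automatically satisfies the cohomological hypothesis. You have also inverted the content of Remark \ref{totreal}: it does not say that monotonicity is irrelevant; it says that if monotonicity is dropped, the claim must be restated with \emph{totally real} immersions in place of Lagrangian ones, precisely because the totally real immersion relation is open and ample and its $h$-principle carries no cohomological condition. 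To repair your argument, replace your unconditional statement of the $h$-principle with the correct one and insert the paper's verification that $h^{*}[\omega]=0$ holds for every $\zeta$-structure under the monotonicity assumption.
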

\begin{proof}
A $\zeta$-structure on $\tau(X)$ is the same thing as a map of $X$ to $M$, along with an inclusion of $\tau(X)$ as an orthogonal (lagrangian) sub-bundle inside the pullback of the tangent bundle of $M$. Now the pullback of $[\omega]$ to $H^2(X,\R)$ factors through $H^2(\BO(m),\R)$, by our assumption that $[\omega] = c_1$ up to a scalar. Since $H^2(\BO(m), \R) = 0$, the h-principle \cite{EM} may now be invoked to show that the space of maps described above is equivalent to the space of lagrangian immersions of $X$ in $M$. 
\end{proof}

\noindent
Motivated by \cite{GMTW}, we consider the Thom spectrum ${\mathcal{G}}(\tau)^{-\zeta}$. An explicit model of this spectrum is obtained as follows. Let $\eta$ be given by the complement of $\zeta$ in $\C^k$. Let ${\mathcal{G}}(\tau)^\eta$ denote the Thom space of this bundle given by the identification space obtained from the bundle $\eta$ by compactifying all vectors at infinity. Define ${\mathcal{G}}(\tau)^{-\zeta}$ to be the spectrum $\Sigma^{-2k}{\mathcal{G}}(\tau)^\eta$, where $\Sigma^{-2k}$ denotes desuspension by the compactification of the vector space $\C^k$. 

\medskip
\noindent
The spectrum ${\mathcal{G}}(\tau)^{-\zeta}$ has the virtue of being a receptacle for compact immersed lagrangians in $M$. Indeed, for a compact manifold $X$ endowed with a lagrangian immersion into $M$, a stable map $[X] : \mbox{S} \longrightarrow {\mathcal{G}}(\tau)^{-\zeta}$ can be constructed as: $ [X] : \mbox{S} \longrightarrow X^{-\tau(X)} \longrightarrow {\mathcal{G}}(\tau)^{-\zeta}$, where $\mbox{S}$ denotes the sphere spetrum, and the first map $\mbox{S} \longrightarrow X^{-\tau(X)}$ is the Pontrjagin--Thom collapse map for some choice of embedding of $X$ in $\R^{\infty}$, and the second map is given by the (negative of the) $\zeta$-structure on $X$\footnote{Notice that the group of reparametrizations of the stable tangent bundle of $X$ acts on $X^{-\tau(X)}$, and may potentially change the immersion class of $X$. We thank Thomas Kargh for pointing this out.}. 

\medskip
\noindent
Since stable maps from $\mbox{S}$ to any spectrum $E$, represents points in the underlying infinite loop space $\Omega^{\infty} (E)$ of that spectrum, the above observation allows us to identify compact immersed lagrangians in $M$ with points in $\Omega^{\infty} ({\mathcal{G}}(\tau)^{-\zeta})$. To make this identification into an equivalence, we require a stabilization of the objects involved. We now proceed to describe the homotopical stabilization process in more detail. The geometric meaning of lagrangian stabilization is described following the description of homotopical stabilization. 

\smallskip
\noindent
Let $(M, \omega)$ be as above and let $\C^n$ be the the complex plane with its standard Hermitian structure. The bundle $\tau \oplus \C^n$ is the restriction of the tangent bundle of $M \times \C^n$ along $M \times \{ 0 \} \subseteq M \times \C^n$. Let ${\mathcal{G}}(\tau \oplus \C^n)$ be defined as the pullback: 
\[
\xymatrix{
{\mathcal{G}}(\tau \oplus \C^n)       \ar[d] \ar[r]^{\zeta_n} & \BO(m+n) \ar[d] \\
M     \ar[r]^{\tau \oplus \C^n \quad} & \BU(m+n).
}
\]
Notice that there is a canonical map:
\[ {\mathcal{G}}(\tau) \longrightarrow {\mathcal{G}}(\tau \oplus \C^n), \]
given by taking the fiberwise cartesian product of lagrangian planes in ${\mathcal{G}}(\tau)$ with the constant lagrangian subspace $\R^n$. Furthermore, the bundle $\zeta_n$ restricts to the bundle $\zeta \oplus \R^n$ along this map. To construct the spectrum ${\mathcal{G}}(\tau \oplus \C^n)^{-\zeta_n}$, we pick the embedding of $\tau \oplus \C^n$ into the trivial bundle $\C^{k+n}$ given by linearly extending the embedding of $\tau$ and proceed as before. 

\smallskip
\noindent
Notice that one has a natural map:
\[   \varphi_n : {\mathcal{G}}(\tau)^{-\zeta} \longrightarrow \Sigma^n \, {\mathcal{G}}(\tau \oplus \C^n)^{-\zeta_n}. \]
In general, given the standard inclusion $\R^{n_1} \subseteq \R^{n_2}$, we obtain a compatible family of natural maps representing a directed system which we call lagrangian stabilization: 
\[ \varphi_{n_1,n_2} : \Sigma^{n_1} {\mathcal{G}}(\tau \oplus \C^{n_1})^{-\zeta_{n_1}} \longrightarrow \Sigma^{n_2} {\mathcal{G}}(\tau \oplus \C^{n_2})^{-\zeta_{n_2}}. \] 
The individual spectra, and the directed system we just constructed have a geometric meaning. We say a few words about that in the following paragraph:

\bigskip
\noindent
{\bf The geometric meaning of lagrangian stabilization:}

\smallskip
\noindent
Let us briefly describe the geometric objects that our stabilization procedure captures. For $n > 0$, consider the Thom spectrum $\Sigma^n {\mathcal{G}}(\tau \oplus \C^n)^{-\zeta_n}$, where the notation is borrowed from the earlier part of this section. Now the methods described in \cite{A} (Sec. 4.4, 5.1), allows one to interpret the infinite loop space $\Omega^{\infty-n} ({\mathcal{G}}(\tau \oplus \C^n)^{-\zeta_n})$ as the moduli space of manifolds $L^{m+n} \subset \R^{\infty} \times \R^n$, with a proper projection onto $\R^n$, and endowed with a $\zeta_n$-structure. By claim \ref{moduli} the latter condition is equivalent to a lagrangian immersion $L^{m+k} \looparrowright M \times \C^n$. 
More precisely, the space $\Omega^{\infty-n} ({\mathcal{G}}(\tau \oplus \C^n)^{-\zeta_n})$ is uniquely defined by the property that given a smooth manifold $X$, the set of homotopy classes of maps $[X, \Omega^{\infty-n} ({\mathcal{G}}(\tau \oplus \C^n)^{-\zeta_n})]$, is in bijection with concordance classes over $X$, of smooth manifolds $E \subset X \times \R^\infty \times \R^n$, so that the first factor projection: $\pi : E \longrightarrow X$ is a submersion, and which are endowed with a smooth map $\varphi : E \longrightarrow M \times \C^n$ which restricts to a lagrangian immersion on each fiber of $\pi$. As before, we demand that the third factor projection $E \longrightarrow \R^n$ be fiberwise proper over $X$. 

\medskip
\noindent
In the above interpretation, given the standard inclusion $\R^{n_1} \subseteq \R^{n_2}$, the natural map: 
\[ \varphi_{n_1,n_2} : \Sigma^{n_1} {\mathcal{G}}(\tau \oplus \C^{n_1})^{-\zeta_{n_1}} \longrightarrow \Sigma^{n_2} {\mathcal{G}}(\tau \oplus \C^{n_2})^{-\zeta_{n_2}}, \] 
on the level of infinite loop spaces, represents the map that sends a concordance class $E$ to $E \times \R^{n_2-n_1}$, by simply taking the product with the orthogonal complement of $\R^{n_1}$ in $\R^{n_2}$. We thank David Ayala for patiently helping us understand this point of view. 

\begin{remark}
Recall from earlier discussion that a compact manifold $L$ admitting a lagrangian immersion $L \looparrowright M$ represents a point in $\Omega^{\infty} ({\mathcal{G}}(\tau)^{-\zeta_M})$. In particular, $L$ generates a point in the directed system. Notice however, that  given a (possibly non-compact) manifold $L$ immersing into $M$, one may still construct a point in this system, provided one has a $1$-form $\alpha$ on $L \times \R^n$ for some $\R^n$, with the property that $\alpha^\ast \iota_{\theta} : L \times \R^n \longrightarrow \R^n$ is proper. Here $\iota_{\theta}$ is the linear projection $T^\ast (L \times \R^n) \longrightarrow (\R^n)^\ast = \R^n$, which is being pulled back to $L \times \R^n$ along $\alpha$. In particular, any function $\phi$ on $L \times \R^n$ so that $\alpha = d \phi$ is proper over $V$ gives rise to a point in the directed system. Such functions $\phi$ are a natural analog of the theory of phase functions in our context \cite{W2}. 
\end{remark}

\begin{defn} \label{stablag}
Define the Thom spectrum $\Omega(M)$ representing the infinite loop space of stabilized lagrangian immersions in $M$ to be the colimit:
\[ \Omega(M) = \underline{\mathcal{G}}(M)^{-\zeta} := \mbox{colim}_n \, \, \Sigma^n {\mathcal{G}}(\tau \oplus \C^n)^{-\zeta_n} \]
Notice that by definition, we have a canonical homotopy equivalence: $\Omega(M \times \C) \simeq \Sigma^{-1} \Omega(M)$. 
\end{defn}

\begin{remark} \label{hpty}
The spectrum $\Omega(M)$ can also be described as a Thom spectrum: Let the stable tangent bundle of $M$ of virtual (complex) dimension $m$ be given by a map $\tau : M \longrightarrow \Z \times \BU$. As suggested by the notation, let $\underline{\mathcal{G}}(M)$ be defined as the pullback: 
\[
\xymatrix{
\underline{\mathcal{G}}(M)       \ar[d] \ar[r]^{\zeta \, \,  } & \Z \times \BO \ar[d] \\
M      \ar[r]^{\tau \quad} & \Z \times \BU.
}
\]
Then the spectrum $\Omega(M)$ is homotopy equivalent to the Thom spectrum of the stable vector bundle $-\zeta$ over $\underline{\mathcal{G}}(M)$ defined in the diagram above. 
\end{remark}

\smallskip
\noindent
Notice that the fibration $\Z \times \BO \longrightarrow \Z \times \BU$ is a prinicipal bundle up to homotopy, with fiber being the infinite loop space $\UO$. Hence, the spectrum $\Omega(M)$ is homotopy equivalent to a $(\UO)^{-\zeta}$-module spectrum. Here the bundle $\zeta$ over $\UO$ is the virtual zero dimensional bundle over $(\UO)$ defined by the canonical inclusion $\UO \longrightarrow \BO$. Now, observe that up to homotopy, we have the equivalence of $\UO$-spaces:
\[ \underline{\mathcal{G}}(M) \times_{\UO} \underline{\mathcal{G}}(N) \simeq \underline{\mathcal{G}}(M \times N). \]
The above equivalence translates to a canonical equivalence, up to homotopy:
\[ \mu : \Omega(M) \wedge_\Omega \Omega(N) \simeq  \Omega(M \times N), \]
where we introduce the notation $\Omega$ for the spectrum $(\UO)^{-\zeta}$.

\begin{example} \label{cotang}
Let us end this section with the example of the homotopy type of $\Omega(M)$ for a cotangent bundle $M = T^\ast X$, on a smooth $m$-dimensional manifold $X$ and endowed with the canonical symplectic form. Now since the tangent bundle $\tau$ of $M$ admits a lift to $\Z \times \BO$,  the space $\underline{\mathcal{G}}(T^\ast X)$ is homotopy equivalent to $(\UO) \times X$. In particular, stably, a compact lagrangian immersion $L \looparrowright T^\ast X$ is represented by a $\zeta \times \tau(X)$ structure on $L$ \footnote{Hence the stable nature of caustics (defined as the critical set of the projection map $\pi : L \rightarrow X$) is measured by a ``universal Maslov structure" on $L$, which we define as a bundle map from the stable fiberwise tangent bundle of $\pi$, to the bundle $\zeta$, that lifts the universal Maslov class $L \longrightarrow \UO$.}. Now it is easy to see that there is a homotopy equivalence:
\[ \Omega(T^\ast X) \simeq \Omega \wedge X^{-\tau(X)} \]
where $X^{-\tau(X)}$ denotes the Thom spectrum of the formal negative of $\tau(X)$. If $X$ were compact, then $\Omega \wedge X^{-\tau(X)}$ is equivalent to $\Map(X, \Omega)$. This can be interpreted as saying that a stable lagrangian $L$ in $T^\ast X$ is represented by a family of virtual dimension zero stable lagrangians parametrized as fibers of the map $\pi : L \rightarrow X$. Furthermore, the structure of $\Omega \wedge X^{-\tau(X)}$ as a ring spectrum provides us with an interesting way of multiplying the stable equivalence class of lagrangians in $T^\ast X$. This is the homotopical version of a $\ast$-algebra of Weinstein \cite{W2}.
\end{example}

\begin{remark}\label{totreal}
In this section if one drops the assumption of monotonicity, i.e. that $[\omega]$ is a scalar multiple of $c_1(M)$, then in the statement of claim \ref{str} lagrangian immersions must be replaced by totally real immersions. In this context, the space $\Omega^{\infty} (\Omega(M))$ geometrically represents the space of stabilized (as above) totally real immersions in $M$. 
\end{remark}

\section{The Stable Symplectic homotopy category} \label{hcat}

\noindent
In this section we describe a stabilization of Weinstein's symplectic category, with morphisms being enriched over the homotopy category of $\Omega$-module spectra. In later sections, we will describe an $A_\infty$-model for this category that is enriched over an honest category of $\Omega$-module spectra. 

\smallskip
\noindent
Let us now describe the stable symplectic homotopy category $h\mathbb{S}$. By definition, the objects of this category $h\mathbb{S}$, will be symplectic manifolds $(M,\omega)$ (see remark \ref{noncpct}), endowed with a compatible almost complex structure. The morphisms in our category $h\mathbb{S}$ will naturally have the structure of Thom spectra. Let $(M,\omega)$ and $(N,\eta)$ be two objects. We define the conjugate of $(M,\omega)$ to be the symplectic manifold $\overline{M}$ which has the same underlying manifold as $M$ but with symplectic form $-\omega$. 

\begin{defn}
The ``morphism spectrum" $\Omega(M,N)$ in $h\mathbb{S}$ between $M$ and $N$ is defined as the $\Omega$-module spectrum: 
\[ \Omega (M,N) := \Omega(\overline{M} \times N). \]
Observe that we have a canonical homotopy equivalence: $\Omega(M\times \C, N) = \Sigma^{-1} \Omega_{\zeta}(M,N)$. The same holds on replacing $N$ by $N \times \C$. Therefore, up to natural equivalence, the morphism spectra factor through the equivalence on symplectic manifolds defined in the introduction. 
\end{defn}

\begin{remark}\label{noncpct}
Notice that objects in $h\mathbb{S}$ are allowed to be non-compact symplectic manifolds. The price we pay for this, as we shall see later, is that we simply lose the identity morphisms for such objects.  Compact manifolds also enjoy other nice properties that fail for non-compact manifolds. We will therefore state explicitly when compactness is assumed. 
\end{remark}

\noindent
The next step is to define composition. The simplest case of composition is of the form:
\[ \Omega(M, \ast) \wedge_{\Omega} \Omega(\ast, N) \longrightarrow \Omega(M,N), \]
where $M$ and $N$ are two objects in $h\mathbb{S}$. This composition is defined to be the map $\mu$ constructed in the previous section, before the example \ref{cotang}. 

\smallskip
\noindent
For the general case, consider $k+1$ objects objects $M_i$ with $0 \leq i \leq k$, and let the space $\underline{\mathcal{G}}(\Delta)$  be defined by the pullback:
\[
\xymatrix{
\underline{\mathcal{G}}(\Delta)       \ar[d]^{\xi} \ar[r] &  \underline{\mathcal{G}}(\overline{M}_0 \times M_1 \times \cdots  \times \overline{M}_{k-1} \times M_k) \ar[d] \\
\overline{M}_0 \times (M_1 \times \cdots \times M_{k-1})  \times M_k    \ar[r]^{\Delta \quad \quad \quad \quad} &  \overline{M}_0 \times (M_1 \times \overline{M}_1) \times \cdots \times (M_{k-1} \times \overline{M}_{k-1}) \times M_k 
}
\]
where $\Delta$ denotes the product to the diagonals $\Delta : M_i \longrightarrow M_i \times \overline{M}_i$, for $0 < i < k$. 

\medskip
\noindent
Now notice that the fibrations defining the pullback above are direct limits of smooth fibrations with compact fiber. Furthermore, the map $\Delta$ is a proper map for any choice of $k+1$-objects (even if they are non-compact). In particular, we may construct the Pontrjagin--Thom collapse map along the top horizontal map by defining it as a direct limit of Pontrjagin--Thom collapses for each smooth stage. 

\smallskip
\noindent
Let $\zeta_i$ denote the individual structure maps $\underline{\mathcal{G}}(\overline{M} _{i-1} \times M_i) \longrightarrow \Z \times \BO$, and let $\eta(\Delta)$ denote the normal bundle of $\Delta$. Performing the Pontrjagin--Thom construction along the top horizontal map in the above diagram yields a morphism of spectra:
\[ \varphi : \Omega(M_0, M_1) \wedge_\Omega \cdots \wedge_\Omega \Omega(M_{k-1}, M_k) \simeq \Omega(\overline{M}_0 \times M_1 \times \cdots  \times \overline{M}_{k-1} \times M_k) \longrightarrow \underline{\mathcal{G}}(\Delta)^{-\lambda} \]
where $\lambda : \underline{\mathcal{G}}(\Delta)  \longrightarrow \Z \times \BO$ is the formal difference of the bundle $\bigoplus \zeta_i$ and the pullback bundle $\xi^* \eta(\Delta)$.

\medskip
\noindent
The next step in defining composition is to show that $ \underline{\mathcal{G}}(\Delta)^{-\lambda}$ is canonically homotopy equivalent to $\Omega(M_0, M_k) \wedge (M_1 \times \cdots \times M_{k-1})_+$, where $(M_1 \times \cdots \times M_{k-1})_+$ denotes the manifold $M_1 \times \cdots \times M_{k-1}$ with a disjoint basepoint. Recall that the map $\xi$ defined as the pullback above is a principal $\UO$-bundle, therefore, to achieve the equivalence we seek, it is sufficient to construct a $\UO$-equivariant map over $\overline{M}_0 \times (M_1 \times \cdots \times M_{k-1})  \times M_k$:
\[ \psi : \underline{\mathcal{G}}(\overline{M}_0 \times M_k) \times (M_1 \times \cdots \times M_{k-1}) \longrightarrow \underline{\mathcal{G}}(\Delta), \]
that pulls $\lambda$ back to the bundle $\zeta \times 0$. The construction of $\psi$ is straightforward.

We define:
\[ \psi( \lambda, m_1, \ldots, m_{k-1}) = \lambda \oplus \Delta(T_{m_1}(M_1)) \oplus \cdots \oplus \Delta(T_{m_{k-1}}(M_{k-1})), \]
where $\Delta(T_m(M)) \subset T_{(m,m)}(M \times \overline{M})$ denotes the diagonal lagrangian subspace. Now let $\pi : \underline{\mathcal{G}}(\Delta)^{-\lambda} \longrightarrow \Omega(M_0,M_k)$ be the projection map that collapses $M_1 \times \cdots \times M_{k-1}$ to a point. 

\begin{defn}
We define the composition map to be the induced composite:
\[ \pi \varphi :  \Omega(M_0, M_1) \wedge_\Omega \cdots \wedge_\Omega \Omega(M_{k-1}, M_k) \longrightarrow \underline{\mathcal{G}}(\Delta)^{-\lambda} \longrightarrow \Omega(M_0,M_k). \]
We leave it to the reader to check that composition as defined above is homotopy associative. In fact, this composition has more structure. The question of how structured this associative composition is will be addressed in the next sections. 
\end{defn}

\noindent
{\bf The identity morphism:}

\medskip
\noindent
We now assume that the object $(M, \omega)$ is a compact manifold. It is a natural question to ask if an identity morphism exists for such an object. 
\begin{claim}
Let $M$ be a compact manifold, and let $[id] : \mbox{S} \longrightarrow \Omega(M,M)$ denote the map that is a representative of the unit map upto homotopy representing the diagonal (lagrangian) embedding $\Delta : M \longrightarrow \overline{M} \times M$. Then $[id]$ is indeed the identity for the composition defined above, up to homotopy. Furthermore, $[id]$ factors through the unit map of the ring spectrum $M^{-\tau}$ (see \cite{C}). 
\end{claim}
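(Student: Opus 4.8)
The plan is to verify that the stable map $[\id]:\mbox{S}\longrightarrow\Omega(M,M)$ represents a two-sided identity for the composition $\pi\varphi$ constructed above, and then to identify the relevant part of its factorization through $M^{-\tau}$. I would restrict attention, for the left-unit law, to the case $k=2$ with $M_0=M_1=M$ and $M_2=N$; the general case follows by iterating and using homotopy associativity, already granted. So one must show that the composite
\[
\Omega(M,M)\wedge_\Omega\Omega(M,N)\xrightarrow{\ [\id]\wedge\id\ }\Omega(M,M)\wedge_\Omega\Omega(M,N)\xrightarrow{\ \pi\varphi\ }\Omega(M,N)
\]
is homotopic to the canonical equivalence $\mu^{-1}$ identifying $\Omega(M,M)\wedge_\Omega\Omega(M,N)$ (smashed against $\mbox{S}$ via $[\id]$) with $\Omega(M,N)$. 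The key is to trace $[\id]$ geometrically: by construction $[\id]$ is the Pontrjagin--Thom class of the compact diagonal lagrangian $\Delta(M)\hookrightarrow\overline M\times M$, carrying the $\zeta$-structure given fiberwise by the diagonal lagrangian subspace $\Delta(T_mM)\subset T_{(m,m)}(\overline M\times M)$. This is precisely the datum the map $\psi$ was designed to insert (with $k-1=1$, $M_1=M$): plugging $[\id]$ into the first smash factor of $\varphi$ lands, after the Pontrjagin--Thom collapse along the top row of the pullback square, in the image of $\psi$, i.e. in $\underline{\mathcal G}(\overline M_0\times M_k)\times M_1=\underline{\mathcal G}(\overline M\times N)\times M$.

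The main computation, then, is the following. Composing with $\pi$ collapses the $M$-factor to a point; one must check that the resulting self-map of $\Omega(M,N)$ is homotopic to the identity. This reduces to a transversality/excess-intersection bookkeeping: the fiber product defining composition with the diagonal is the honest (transverse, since $\Delta(M)$ is embedded) intersection $\Delta(M)\times_M L$, which is diffeomorphic to $L$ itself, and the Pontrjagin--Thom collapse for the embedded $\Delta(M)$ contributes no excess normal bundle — the bundle $\lambda$ restricts correctly to $\zeta\times 0$ under $\psi$, exactly as asserted in the construction. I would make this precise by a Pontrjagin--Thom naturality diagram: the collapse map for $\Delta(M)\hookrightarrow \overline M\times M$ smashed with $\Omega(M,N)$ agrees, after applying $\psi$ and $\pi$, with the identity collapse, because collapsing the zero-codimension piece $\Delta(M)\times_M L\cong L$ inside itself is the identity up to the canonical Thom-space identifications. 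The right-unit law is symmetric. The hard part will be making the excess-intersection/normal-bundle matching under $\psi$ genuinely canonical at the spectrum level rather than merely up to unspecified homotopy — i.e. checking that the two $\zeta$-structures on $L$ (the original one, and the one obtained by intersecting with the diagonal and re-identifying) agree through a contractible choice, so that the resulting homotopy is well-defined.

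Finally, for the factorization through $M^{-\tau}$: the diagonal embedding $\Delta:M\hookrightarrow\overline M\times M$ has normal bundle canonically the tangent bundle $\tau=TM$ (via $(v,w)\mapsto v-w$), so the Pontrjagin--Thom class of $\Delta(M)$ with its tautological $\zeta$-structure factors as $\mbox{S}\to M^{-\tau}\to\Omega(\overline M\times M)=\Omega(M,M)$, where the first map is the Atiyah--Pontrjagin--Thom unit of the Thom-spectrum ring structure on $M^{-\tau}$ (the Thom diagonal / Spanier--Whitehead dual unit, as in \cite{C}), and the second is induced by the $\zeta$-structure $\underline{\mathcal G}(\tau)\to\underline{\mathcal G}(\overline M\times M)$ classifying the diagonal lagrangian. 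Compactness of $M$ is used here exactly to know $M^{-\tau}$ is a (dualizable) ring spectrum with a unit $\mbox{S}\to M^{-\tau}$; this is the same place compactness entered the existence of $[\id]$ as a genuine element of $\Omega^\infty\Omega(M,M)$, and it is why Remark \ref{noncpct} flags the loss of identities in the non-compact case.
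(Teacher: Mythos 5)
Your proposal is correct and follows essentially the same route as the paper: the paper likewise identifies $[\id]$ with the Pontrjagin--Thom class of the diagonal factored through $M^{-\tau}$ (the normal bundle of $\Delta$ being $\tau$), and proves the unit law by observing that $\overline{N}\times\Delta(M)\times M$ meets $\overline{N}\times M\times\Delta(M)$ transversally in the thin diagonal $\overline{N}\times\Delta_3(M)$, so that the collapse-then-project composite is the identity --- exactly the spectrum-level Pontrjagin--Thom naturality diagram you sketch. The point you flag as "the hard part" is resolved in the paper precisely by this universal transversality statement rather than lagrangian-by-lagrangian bookkeeping.
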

\begin{proof}
The fact that the map representing the lagrangian embedding $\Delta$ factors through $M^{-\tau}$ is essentially the method of how one constructs a map $\mbox{S} \longrightarrow \Omega(M,M)$ given a lagrangian in $\overline{M} \times M$. Now given two manifolds $M,N$, let $\Delta(M) \subset \overline{M} \times M$ is the diagonal representative of $[id]$ as above. Observe that $\overline{N} \times \Delta(M) \times M$ is transverse to $\overline{N} \times M \times \Delta(M)$ inside $\overline{N} \times M \times \overline{M} \times M$. They intersect along $\overline{N} \times \Delta_3(M)$, where $\Delta_3(M) \subset M \times \overline{M} \times M$ is the triple (thin) diagonal. Hence we get a commutative diagram up to homotopy:
\[
\xymatrix{
\Omega(N, M) \wedge \mbox{S} \ar[dr] \ar[r] & \Omega(N, M) \wedge \Delta(M)^{-\tau} \ar[r]^{\Delta^{-\tau}} \ar[d] &  \Omega(N,M) \wedge_\Omega \Omega(M,M) \ar[d] \\
 & \Omega(N, M) \ar[r]^{=} & \Omega(N,M)}
\]
where the right vertical map is composition, and the left vertical map is the Pontrjagin--Thom collapse over the inclusion map $\overline{N} \times M = \overline{N} \times \Delta_3(M) \longrightarrow \overline{N} \times M \times \Delta(M)$. Now consider the following factorization of the identity map: 
\[ \overline{N} \times M = \overline{N} \times \Delta_3(M) \longrightarrow \overline{N} \times M \times \Delta(M) \longrightarrow \overline{N} \times M \]
where the last map is the projection onto the first two factors. Performing the Pontrjagin--Thom collapse over this composite shows that the following composite is the identity: 
\[ \Omega(N,M) \wedge \mbox{S} \longrightarrow \Omega(N,M) \wedge \Delta(M)^{-\tau} \longrightarrow \Omega(N,M). \]
This shows that right multiplication by $[id] : \mbox{S} \rightarrow \Omega(M,M)$ induces the identity map on $\Omega(N,M)$, up to homotopy. A similar argument works for left multiplication. 
\end{proof}

\medskip
\noindent
We may interpret the composition map of $h\mathbb{S}$ geometrically as follows: Given a collection of $k$ lagrangians $L_i$ for $1 \leq i \leq k$, let $Y = L_1 \times L_2 \times \cdots \times L_k$ be a product of stable lagrangians immersed in  $\overline{M}_0 \times (M_1 \times \overline{M}_1) \times \cdots \times (M_{k-1} \times \overline{M}_{k-1}) \times M_k$. Assume that $X \subset Y$ is the transverse intersection of $Y$ along $\overline{M}_0 \times \Delta(M_1) \times \cdots \times \Delta (M_{k-1}) \times M_k$. Then we have a homotopy commutative diagram:
\[
\xymatrix{
Y^{-\tau(Y)} \ar[r] \ar[d] & X^{-\tau(X)} \ar[d] \\
 \Omega_{\zeta_1}(M_0, M_1) \wedge_\Omega \cdots \wedge_\Omega \Omega_{\zeta_k}(M_{k-1}, M_k) \ar[r] & 
\Omega(M_0,M_k).}
\]
with the top row representing the Pontrjagin--Thom construction along the inclusion $X \subset Y$. It is easy to see that $X$ supports a lagrangian immersion into $\overline{M}_0 \times M_k$, represented by the composite: 
\[ [X] : \mbox{S} \longrightarrow X^{-\tau(X)} \longrightarrow \Omega(M_0,M_k). \]
Now let $\Omega^{\infty}(M,N)$ denote the infinite loop space $\Omega^\infty(\Omega(M,N))$. Then from the geometric standpoint on the symplectic category, it is more natural to consider the unstable composition map:
\[  \Omega^{\infty}(M_0, M_1) \times \cdots \times \Omega^{\infty}(M_{k-1}, M_k) \longrightarrow \Omega^{\infty}(M_0, M_k) \]
By applying $\pi_0$ to this morphism, and invoking the above observation, we see that the definition of composition is faithful to Weinstein's definition of composition in the symplectic category.

\begin{remark}
Recall that given arbitrary symplectic manifolds $M$ and $N$, there is a natural decomposition of $\Omega(M,N)$ in $h\mathbb{S}$ induced by the composition map:
\[ \Omega(M,\ast) \wedge_{\Omega} \Omega(\ast, N) \simeq \Omega(M,N). \]
In particular, arbitrary compositions can be canonically factored using the above decomposition, and computed by applying the following composition map internally to the factors: 
\[ \Omega(\ast, N) \wedge_{\Omega} \Omega(N, \ast) \longrightarrow \Omega. 
\]
\end{remark}

\section{Internal structure of the stable symplectic homotopy category} \label{four}

\noindent
Recall that $\Omega = (\UO)^{-\zeta}$ is a commutative ring spectrum. This spectrum has been studied in \cite{AG} (Section 2) and even earlier in \cite{SS}. In particular,
\[ \pi_{\ast} \Omega = \Z/2[x_{2i+1}, i \neq 2^k-1]. \]
In addition, the ring $\pi_\ast \Omega $ can be detected as a subring of $\pi_\ast \MO$. It follows that $\Omega $ is a generalized Eilenberg--Mac\,Lane spectrum over $\Hrt$. Since it acts on all morphism spectra $\Omega_{\zeta}(M,N)$, we see that all the spectra $\Omega (M,N)$ are also generalized Eilenberg--Mac\,Lane spectra (see the Appendix).

\begin{remark}
One has an oriented version of this category $hs\mathbb{S}$, obtained by replacing $\BO$ by $\BSO$. All definitions go through in this setting verbatim. However, the "coefficients" $s\Omega $ is no longer a generalized Eilenberg--Mac\,Lane spectrum. Its rational homology (or stable homotopy) is easily seen to be an exterior algebra on generators $y_{4i+1}$ in degrees $4i+1$, with $i \geq 0$:
\[ \pi_\ast s\Omega \otimes \Q = \Lambda (y_{4i+1} ). \] 

\end{remark}

\medskip

\begin{claim}\label{D}
Let $M$ be a compact symplectic manifold. Then the spectrum $\Omega(M, \ast)$ is canonically homotopy equivalent to $\Hom_{\Omega}(\Omega(\ast, M), \Omega)$. In other words, $\Omega_{\zeta}(M)$ is dual to $\Omega(\overline{M})$. 
\end{claim}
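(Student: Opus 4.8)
The plan is to identify the Spanier–Whitehead/Atiyah-type duality for the Thom spectrum $\Omega(M) = \underline{\mathcal{G}}(M)^{-\zeta}$ when $M$ is compact, and then track through how the $\Omega$-module structures on $\Omega(\ast, M) = \Omega(\overline{M})$ and $\Omega(M,\ast) = \Omega(M)$ (note $\overline{\,\overline{M}\,} = M$, so these bare morphism spectra are $\Omega(\overline{M})$ and $\Omega(M)$ respectively) intertwine with this duality. The essential input is that $M$ compact forces $\underline{\mathcal{G}}(M)$ to be a compact manifold fibration over $M$ with compact fiber (a twisted form of $\mathrm{U}/\mathrm{O}$-type spaces assembled into $\Z\times\BO$), together with the Atiyah duality statement that for a closed manifold $P$, $P^{-\tau(P)}$ is the Spanier–Whitehead dual of $P_+$; more generally, for a vector bundle $\nu$ over $P$, $P^{-\nu}$ is dual to $P^{\nu - \tau(P)}$. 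Applying this to $P$ a suitable finite stage of $\underline{\mathcal{G}}(M)$ (or working with the stable bundle picture of Remark~\ref{hpty} and passing to colimits, using that each $\Sigma^n\mathcal{G}(\tau\oplus\C^n)^{-\zeta_n}$ is built from compact manifolds) gives a duality pairing. The point is that the stable tangent bundle of $\underline{\mathcal{G}}(M)$ decomposes, up to the relevant virtual bundle bookkeeping, as the pullback of $\tau(M)$ (a real bundle of rank $2m$, which is $\zeta \oplus \zeta^{\perp}$ fiberwise, with $\zeta^\perp$ the "conjugate" complementary lagrangian) plus the tangent along the fibers, and this is precisely what converts $-\zeta$ into the virtual bundle whose Thom spectrum represents the conjugate manifold $\overline{M}$.

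Concretely, the key steps, in order, are: (1) reduce to a finite stage, i.e. fix $n$ large and work with the honest compact manifold bundle $\mathcal{G}(\tau\oplus\C^n) \to M$ with fiber $\U(m+n)/\Or(m+n)$, so that $\mathcal{G}(\tau\oplus\C^n)$ is a closed manifold; (2) compute its stable tangent bundle as $\xi^*\tau(M) \oplus T^{\mathrm{fib}}$ where $T^{\mathrm{fib}}$ is the tangent along the fiber, and observe that over $\mathcal{G}(\tau)$ the tautological lagrangian $\zeta$ and its orthogonal complement inside $\tau(M)$ satisfy $\zeta \oplus \zeta^{\perp} \cong \xi^*\tau(M)$ with $\zeta^\perp$ being the tangent space data for the "conjugate" lagrangian grassmannian — this is the geometric heart identifying $\overline{M}$'s grassmannian bundle with $M$'s; (3) apply Atiyah duality to get $\mathcal{G}(\tau\oplus\C^n)^{-\zeta_n}$ dual to $\mathcal{G}(\tau\oplus\C^n)^{\zeta_n - \tau(\mathcal{G})}$, and identify the latter, after rearranging the virtual bundle using step (2), with (a suspension of) $\underline{\mathcal{G}}(\overline{M})^{-\bar\zeta}= \Omega(\overline{M})$; (4) pass to the colimit over $n$, checking that lagrangian stabilization on one side matches the dual operation on the other (here the equivalences $\Omega(M\times\C)\simeq\Sigma^{-1}\Omega(M)$ of Definition~\ref{stablag} keep the degree shifts consistent — this is where compactness is genuinely used, since without it the collapse maps defining the pairing need not exist); (5) verify the pairing is one of $\Omega$-modules, i.e. that the $\Omega = (\UO)^{-\zeta}$-action (coming from $\underline{\mathcal{G}}(M)\times_{\UO}\underline{\mathcal{G}}(N)\simeq\underline{\mathcal{G}}(M\times N)$ before Example~\ref{cotang}) is self-dual under the identification, so that the duality upgrades to $\Omega(M,\ast)\simeq\Hom_\Omega(\Omega(\ast,M),\Omega)$ rather than merely $\Hom_{\mathrm{S}}(\Omega(\ast,M),\mathrm{S})$; this last point follows formally once one observes $\Omega$ is a commutative ring spectrum and the unit $\mathrm{S}\to\Omega$ realizes the $\mathrm{S}$-linear dual as the $\Omega$-linear dual after base change, combined with the fact that $\Omega(M)$ is already an $\Omega$-module.

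The main obstacle I expect is step (2) together with the bookkeeping in step (3)–(4): making precise the claim that the lagrangian grassmannian bundle of $M$ and that of $\overline{M}$ are interchanged by Atiyah duality in a way that exactly cancels the $\zeta$ versus $-\zeta$ twist. One has to be careful that "$\zeta$ over $\underline{\mathcal{G}}(M)$" and "$\zeta$ over $\underline{\mathcal{G}}(\overline{M})$" are genuinely different bundles (the compatible almost complex structure on $\overline{M}$ is the conjugate one, which swaps a lagrangian with its $J$-orthogonal complement), and that the virtual bundle $\zeta_n - \tau(\mathcal{G}(\tau\oplus\C^n))$ produced by Atiyah duality really does simplify to $-\bar\zeta$ modulo the expected suspension. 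A clean way to organize this is via Remark~\ref{hpty}: work stably from the start with $\underline{\mathcal{G}}(M)\to M\to\Z\times\BU$ and the fibration $\Z\times\BO\to\Z\times\BU$ with fiber $\UO$, use that $\UO$ carries a canonical (virtual-dimension-zero) bundle $\zeta$ with a self-duality coming from complex conjugation on $\U$ fixing $\Or$, and deduce the manifold-level statement as a consequence. Everything else — the colimit argument, the verification of $\Omega$-linearity, and the appeal to Atiyah/Spanier–Whitehead duality for the compact stages — is routine once that geometric identification is in hand.
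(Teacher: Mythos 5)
Your strategy is the ``Poincar\'e duality in disguise'' that the paper's remark after the claim alludes to, but it is \emph{not} the paper's proof, and as written steps (3)--(5) have genuine gaps. First, at a finite stage the closed manifold $P_n = \mathcal{G}(\tau\oplus\C^n)$ has stable tangent bundle $\xi^\ast(\tau_M\oplus\C^n)\oplus T^{\mathrm{fib}}$, where the tangent along the fiber $\U(m+n)/\Or(m+n)$ is $T^{\mathrm{fib}}\cong \mathrm{Sym}^2(\zeta_n^\ast)$. Since $\xi^\ast(\tau_M\oplus\C^n)\cong\zeta_n\oplus\zeta_n$, Atiyah duality gives the Spanier--Whitehead dual of $P_n^{-\zeta_n}$ as $P_n^{\,\zeta_n-\tau(P_n)}\simeq P_n^{-\zeta_n-\mathrm{Sym}^2(\zeta_n^\ast)}$, and the $\mathrm{Sym}^2$ term is not stably trivial (e.g.\ $w_1(\mathrm{Sym}^2\zeta_n^\ast)=(m{+}n{+}1)\,w_1(\zeta_n)$ is nonzero for $m+n$ even). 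So the finite stages are not self-dual up to suspension, and the discrepancy is exactly the fiber direction. Second, $\Hom_\Omega(\colim_n(-),\Omega)$ is a homotopy limit, so stage-wise duals do not assemble into the dual of the colimit without a separate finiteness argument over $\Omega$ (e.g.\ that $\Omega(M)$ is a finite cell $\Omega$-module when $M$ is compact, via the principal $\UO$-bundle structure on $\underline{\mathcal{G}}(M)\to M$). Third, step (5) is backwards: $\Omega(M)$ is an infinite Thom spectrum and is not dualizable over $\mbox{S}$ at all, so there is no $\mbox{S}$-linear duality to base-change along $\mbox{S}\to\Omega$; the duality exists only $\Omega$-linearly, precisely because working over $\Omega$ divides out the fiber direction responsible for the $\mathrm{Sym}^2$ discrepancy. (Also a small slip: with $\Omega(M,N):=\Omega(\overline{M}\times N)$ one has $\Omega(M,\ast)=\Omega(\overline{M})$ and $\Omega(\ast,M)=\Omega(M)$, the opposite of your identification.)

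The paper's actual argument sidesteps all of this bundle bookkeeping: it is the formal duality argument in a monoidal category. The evaluation is the composition map $\Omega(\ast,M)\wedge_\Omega\Omega(M,\ast)\to\Omega$, and the coevaluation is the identity morphism $[id]:\mbox{S}\to\Omega(M,M)\simeq\Omega(M,\ast)\wedge_\Omega\Omega(\ast,M)$, which exists exactly when $M$ is compact because it is the Pontrjagin--Thom collapse through $M^{-\tau}$ for the diagonal lagrangian. The adjoint of evaluation is the map $\Omega(M,\ast)\to\Hom_\Omega(\Omega(\ast,M),\Omega)$, and its inverse is built from $[id]$ followed by evaluation; the triangle identities reduce to the transversality of diagonals already used to show $[id]$ is a unit. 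Compactness enters \emph{only} through the existence of $[id]$, not through any collapse map for $\underline{\mathcal{G}}(M)$. If you want to make the geometric route rigorous, the right framework is fiberwise (parametrized) duality over the compact base $M$ for a bundle of invertible $\Omega$-modules with fiber $\Omega$, not stage-wise S-duality of the total spaces.
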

\begin{proof}
Consider the composition map: $\Omega(\ast, M) \wedge \Omega(M, \ast) \longrightarrow \Omega$. Taking the adjoint of this map yields the map we seek to show is an equivalence:
\[ \Omega(M, \ast) \longrightarrow \Hom_{\Omega}(\Omega(\ast, M), \Omega). \]
To construct a homotopy inverse to the above map, one uses the identity morphism:
\[  \Hom_{\Omega}(\Omega(\ast, M), \Omega) \wedge \mbox{S} \longrightarrow  \Hom_{\Omega}(\Omega(\ast, M), \Omega) \wedge_\Omega \Omega_{\zeta}(M,M) \longrightarrow \Omega(M, \ast), \]
where the last map is evaluation, once we identify $\Omega(M,M)$ with $\Omega(M,\ast) \wedge_{\Omega} \Omega(\ast, M)$. Details are left to the reader. 
\end{proof}
\begin{remark}
This is really Poincar\'e duality in disguise that says the the dual of the Thom spectrum of a bundle $\xi$ over $M$ is the Thom spectrum of $-\xi - \tau$, where $\tau$ is the tangent bundle of $M$. So the bundle that is ``self dual" is the bundle $\xi$ so that $\xi = -\xi -\tau$, i.e. $2\xi = -\tau$. The bundle $-\zeta$ is the universal bundle that satisfies this condition. 
\end{remark}

\noindent
The next theorem follows easily the definitions, and the above results:
\begin{thm} \label{Weyl1}
For arbitrary symplectic manifolds $M$ and $N$, there are canonical equivalences of $\Omega$-module spectra:
\[ \Omega(\ast, \overline{M} \times N) \simeq \Omega(M \times \overline{N}, \ast) \simeq \Omega(\overline{N},\overline{M}) \simeq \Omega(M,N). \]
Furthermore if $M$ is compact, then using duality on $M$, we have a canonical homotopy equivalence of $\Omega$-module spectra:
\[  \Omega(M, N) \simeq \Hom_{\Omega}(\Omega(\ast, M), \Omega(\ast, N)) \]
which is compatible with composition in $h\mathbb{S}$. In particular, for a compact manifold $M$, the ring spectrum $\Omega(M,M)$ has the structure of an endomorphism algebra up to homotopy \footnote{See section \ref{stab} for an interpretation of this algebra.}:
\[ \Omega(M,M) \simeq \End_{\Omega} \left( \Omega(\ast, M) \right). 
\]
All this structure holds for $s\Omega(M,M)$ as well. Furthermore, in the unoriented case, the above theorem may be strengthened (see theorem \ref{Weyl}). 
\end{thm}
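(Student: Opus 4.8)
The plan is to bootstrap everything from three facts already in hand: the monoidal equivalence $\mu : \Omega(A)\wedge_\Omega\Omega(B)\simeq\Omega(A\times B)$, the duality of Claim \ref{D}, and the definition of composition in $h\mathbb{S}$ (whose simplest instance $\Omega(M,\ast)\wedge_\Omega\Omega(\ast,N)\to\Omega(M,N)$ is exactly $\mu$). The opening chain of equivalences
\[ \Omega(\ast, \overline{M} \times N) \simeq \Omega(M \times \overline{N}, \ast) \simeq \Omega(\overline{N},\overline{M}) \simeq \Omega(M,N) \]
is purely formal: unwinding $\Omega(A,B):=\Omega(\overline{A}\times B)$, using $\overline{\overline{A}}=A$, and using the evident symplectomorphism $A\times B\cong B\times A$, every term is canonically $\Omega(\overline{M}\times N)$, and the identifications are $\Omega$-linear because $\mu$ is.

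Next I would prove the duality statement. Applying $\mu$ gives
\[ \Omega(M,N)=\Omega(\overline{M}\times N)\simeq\Omega(\overline{M})\wedge_\Omega\Omega(N)=\Omega(M,\ast)\wedge_\Omega\Omega(\ast,N). \]
When $M$ is compact, Claim \ref{D} says $\Omega(\ast,M)=\Omega(M)$ is a dualizable $\Omega$-module with dual $\Omega(M,\ast)\simeq\Hom_\Omega(\Omega(\ast,M),\Omega)$. Hence the canonical assembly map
\[ \Hom_\Omega(\Omega(\ast,M),\Omega)\wedge_\Omega\Omega(\ast,N)\longrightarrow\Hom_\Omega(\Omega(\ast,M),\Omega(\ast,N)) \]
is an equivalence — the standard fact that $DP\wedge_\Omega Q\simeq\Hom_\Omega(P,Q)$ for $P$ dualizable over a commutative ring spectrum. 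Composing the two displays yields $\Omega(M,N)\simeq\Hom_\Omega(\Omega(\ast,M),\Omega(\ast,N))$.

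Then I would check compatibility with composition. By the decomposition remark at the end of section \ref{hcat}, composition $\Omega(M,N)\wedge_\Omega\Omega(N,K)\to\Omega(M,K)$ factors, under $\Omega(-,-)\simeq\Omega(-,\ast)\wedge_\Omega\Omega(\ast,-)$, through the internal pairing $\Omega(\ast,N)\wedge_\Omega\Omega(N,\ast)\to\Omega$ applied to the inner factors. Under the adjunction of Claim \ref{D} this internal pairing is precisely the evaluation $\Omega(\ast,N)\wedge_\Omega\Hom_\Omega(\Omega(\ast,N),\Omega)\to\Omega$. Since the composition law $\Hom_\Omega(P,Q)\wedge_\Omega\Hom_\Omega(Q,R)\to\Hom_\Omega(P,R)$ for dualizable $P,Q$ is, after the identifications above, exactly "$DP\wedge_\Omega Q\wedge_\Omega DQ\wedge_\Omega R\to DP\wedge_\Omega R$ via the evaluation $Q\wedge_\Omega DQ\to\Omega$", the two composition laws agree up to homotopy. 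Taking $M=N$ then gives the ring equivalence $\Omega(M,M)\simeq\End_\Omega(\Omega(\ast,M))$. The oriented case is verbatim the same: $\mu$, Claim \ref{D} and the composition law all hold with $\BSO$ in place of $\BO$, and nothing above used that $\Omega$ is Eilenberg--Mac\,Lane.

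The main obstacle is the bookkeeping in the third paragraph: matching the Pontrjagin--Thom/diagonal definition of composition in $h\mathbb{S}$ with the abstract composition of internal-hom objects, keeping careful track of the adjunctions built into Claim \ref{D}. None of this is deep, but it is the one place where something must genuinely be verified rather than quoted; everything else is unwinding definitions and invoking $\mu$ and dualizability.
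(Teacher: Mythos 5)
Your proposal is correct and follows the same route the paper intends: the paper offers no written proof beyond the remark that the theorem ``follows easily from the definitions and the above results,'' and the results it has in mind are exactly the ones you use --- the monoidal equivalence $\mu$, Claim \ref{D}, and the decomposition of composition through the pairing $\Omega(\ast,N)\wedge_\Omega\Omega(N,\ast)\to\Omega$. Your identification of the one point needing genuine verification (matching the Pontrjagin--Thom composition with the abstract composition of internal homs) is apt, and is precisely the detail the paper leaves to the reader.
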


\medskip
\noindent
{\bf Thom classes and an algebraic representation:} 

\medskip
\noindent
The next item on the agenda is to construct Thom classes. Notice from remark \ref{hpty} that the Thom spectra $\Omega(M,N)$ admit maps to $\Sigma^{-(m+n)} \MO$ that classify the map induced by the virtual bundle $-\zeta$, and are canonical up to homotopy. Similarly, morphisms in the oriented categoriy $hs\mathbb{S}$ admit canonical maps to $\MSO$. Working with $h\mathbb{S}$ for simplicity, let $\Fr$ now be a spectrum with the structure of a commutative algebra over the spectrum $\MO$. This allows us to obtain $\Omega$-equivariant Thom classes:
\[ \Fr(M,N) : \Omega(M,N) \longrightarrow \Sigma^{-(m+n)} \MO \longrightarrow \Sigma^{-(m+n)} \Fr. \] 

\begin{remark}
Given a choice of Thom classes, any object $N$ in $h\mathbb{S}$ has an induced $\Fr$ orientation: $N^{-\tau} \longrightarrow \Omega(N,N) \longrightarrow \Sigma^{-2n} \Fr$, induced by a lift of the diagonal inclusion $N \longrightarrow \overline{N} \times N$. 
\end{remark}

\noindent
The Thom isomorphism theorem is now a formal consequence of the definitions. Consider the diagonal map: $\Omega(M,N) \longrightarrow \Omega(M,N) \wedge (\overline{M} \times N)_+$,
where $(\overline{M} \times N)_+$ denotes the space $\overline{M} \times N$ with a disjoint base point. This allows us to construct Thom maps in $\Fr$-homology and cohomology, given by capping with the Thom class, and cupping with it respectively. The Thom isomorphism now follows by an easy argument:
\begin{claim} \label{thom}
Let $M$ and $N$ be any two symplectic manifolds of dimension $2m$ and $2n$ resp. Given $\Omega$-equivariant Thom classes as above, there are canonical Thom isomorphisms:
\[ \pi_\ast (\Omega(M,N) \wedge_{\Omega} \Fr) = \Fr_{\ast+m+n}(\overline{M} \times N), \quad \pi_\ast  \Hom_{\Omega}(\Omega(M, N), \Fr) = \Fr^{-\ast+m+n} (\overline{M} \times N). \]
\end{claim}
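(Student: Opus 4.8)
The plan is to deduce the Thom isomorphisms directly from the module structure and the Thom classes, essentially by reducing to the classical Thom isomorphism theorem applied fiberwise to the pullback diagram of remark \ref{hpty}. Recall that $\Omega(M,N) = \Omega(\overline{M}\times N)$ is the Thom spectrum of the virtual bundle $-\zeta$ of virtual dimension $-(m+n)$ over $\underline{\mathcal{G}}(\overline{M}\times N)$, and that the structure map exhibits $\underline{\mathcal{G}}(\overline{M}\times N) \to \overline{M}\times N$ as a principal $\UO$-bundle up to homotopy. Since $\zeta$ is pulled back from the universal bundle over $\Z\times\BO$, the $\Omega$-equivariant Thom class $\Fr(M,N) : \Omega(M,N) \to \Sigma^{-(m+n)}\Fr$ is precisely the Thom class of $-\zeta$ with coefficients in the $\MO$-algebra $\Fr$.

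First I would set up the diagonal (Thom diagonal) map $\Omega(M,N) \to \Omega(M,N)\wedge(\overline{M}\times N)_+$ coming from the bundle projection $\underline{\mathcal{G}}(\overline{M}\times N) \to \overline{M}\times N$, exactly as indicated before the statement of the claim. Smashing with $\Fr$ over $\Omega$, and using the Thom class to contract the first factor, gives a map
\[ \Omega(M,N)\wedge_\Omega \Fr \longrightarrow \Sigma^{-(m+n)}\Fr \wedge (\overline{M}\times N)_+ . \]
I would then show this is an equivalence by the usual argument: both sides are module spectra over $\Fr$ (hence over $\MO$), the source is the $\Fr$-homology Thom spectrum of $-\zeta$, and a Thom class for an $\MO$-module theory is automatically a Thom class because $\MO$ is an $\F_2$-Eilenberg--Mac\,Lane spectrum (this is where the unoriented hypothesis, or its oriented analogue with $\MSO$-module coefficients, enters); one checks the equivalence over a point and extends by a Mayer--Vietoris/colimit argument over a CW filtration of $\overline{M}\times N$, legitimate since $\underline{\mathcal{G}}(\overline{M}\times N)$ is a (co)limit of the smooth finite stages $\mathcal{G}(\tau\oplus\C^n)$ with compact fiber. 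Taking $\pi_\ast$ of both sides yields $\pi_\ast(\Omega(M,N)\wedge_\Omega\Fr) = \Fr_{\ast+m+n}(\overline{M}\times N)$, the first isomorphism. For the second, I would dualize: apply $\Hom_\Omega(-,\Fr)$ to the Thom diagonal and cup with the Thom class to get $\Hom_\Omega(\Omega(M,N),\Fr) \simeq F((\overline{M}\times N)_+, \Sigma^{\,m+n}\Fr)$, whence $\pi_\ast \Hom_\Omega(\Omega(M,N),\Fr) = \Fr^{-\ast+m+n}(\overline{M}\times N)$; alternatively invoke claim \ref{D} together with the first isomorphism when $M$ is compact, and handle the general case by the same direct cup-product argument.

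The step I expect to be the main obstacle is the verification that the candidate map is genuinely an equivalence rather than merely a natural transformation — i.e. that $\Fr(M,N)$ really is a ``Thom class'' in the sense required for a Thom isomorphism over the full (infinite-dimensional, only homotopy-principal) bundle $\underline{\mathcal{G}}(\overline{M}\times N)$. The clean way around this is to observe that $\Omega$ is a generalized Eilenberg--Mac\,Lane spectrum over $\Hrt$ (as recorded just before the claim), so every $\Omega$-module, in particular $\Omega(M,N)$ and $\Fr(M,N)$'s target, splits as a wedge of suspensions of $\Hrt$; the Thom isomorphism for ordinary $\F_2$-homology is classical and unconditional, and the general statement follows by smashing with $\Fr$ over $\Omega$. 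The remaining bookkeeping — compatibility of the Thom diagonal with the colimit over $n$, and tracking the virtual dimension shift $m+n$ — is routine.
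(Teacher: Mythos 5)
Your argument is correct and follows the paper's own route: the paper likewise constructs the Thom diagonal $\Omega(M,N)\to\Omega(M,N)\wedge(\overline{M}\times N)_+$, caps/cups with the $\Omega$-equivariant Thom class, and then asserts the isomorphism "follows by an easy argument" — the easy argument being exactly the fiberwise check over a point plus a colimit/CW-filtration comparison (or the generalized Eilenberg--Mac\,Lane splitting) that you supply. The only substantive content you add beyond the paper is the explicit verification that smashing over $\Omega$ collapses the $\UO$-fiber direction so that the answer is the (co)homology of the base $\overline{M}\times N$ rather than of $\underline{\mathcal{G}}(\overline{M}\times N)$, which is the right point to make explicit.
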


\noindent
The following is now an easy consequence of \ref{Weyl1}:
 
 \begin{thm} \label{rep}
There exists an (algebraic) representation of the stable symplectic homotopy category in the category of $\pi_\ast \E$-modules:
\[ q : \E_{\ast+m} (M) \otimes \pi_\ast \Omega(M,N) \longrightarrow \E_{\ast+n} (N). \]
Furthermore, one has an intersection pairing : $\E_{\ast+m}(M) \otimes \E_{\ast+m} (\overline{M}) \longrightarrow \E_\ast$, which is non-degenerate for compact manifolds $M$. 
All of this structure also holds for the oriented case. In \cite{KM} we will show that, working over $\Q$, a group isomorphic to an abelian quotient of the Grothendieck-Teichm\"uller group, acts via monoidal automorphisms on this representation (see question \ref{GT}). 
\end{thm}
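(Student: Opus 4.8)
The plan is to obtain the representation $q$ directly from the composition map of $h\mathbb{S}$ after smashing with $\E$ over $\Omega$, and to read off the intersection pairing and its non-degeneracy from the duality of Claim~\ref{D}. First I would fix the $\Omega$-algebra structure on $\E$: by remark~\ref{hpty} there is a canonical ring map $\Omega = \Omega(\ast,\ast)\to\MO$ (the Thom class of $-\zeta$ over $\underline{\mathcal{G}}(\ast)$), so any $\MO$-algebra $\E$ becomes an $\Omega$-algebra via $\Omega\to\MO\to\E$, and in the oriented case an $s\Omega$-algebra via $s\Omega\to\MSO\to\E$. With this, the Thom isomorphism of Claim~\ref{thom}, applied with one of the two manifolds taken to be a point, identifies
\[ \pi_\ast\big(\Omega(\ast,M)\wedge_\Omega\E\big)\;=\;\E_{\ast+m}(M),\qquad \pi_\ast\big(\Omega(\ast,N)\wedge_\Omega\E\big)\;=\;\E_{\ast+n}(N). \]

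For the representation itself I would take the $k=2$ composition map of $h\mathbb{S}$ with $M_0=\ast$, $M_1=M$, $M_2=N$, namely $\Omega(\ast,M)\wedge_\Omega\Omega(M,N)\to\Omega(\ast,N)$, smash it over $\Omega$ with $\E$, and commute the smash factors to obtain
\[ \big(\Omega(\ast,M)\wedge_\Omega\E\big)\wedge_\Omega\Omega(M,N)\;\longrightarrow\;\Omega(\ast,N)\wedge_\Omega\E. \]
Applying $\pi_\ast$ together with the smash-product pairing $\pi_\ast X\otimes_{\pi_\ast\Omega}\pi_\ast Y\to\pi_\ast(X\wedge_\Omega Y)$ and the identifications above yields $q$. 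That $q$ is a representation of the category, i.e.\ that $q(q(x,f),g)=q(x,fg)$ for $f\in\pi_\ast\Omega(M,N)$ and $g\in\pi_\ast\Omega(N,K)$, is read off the $k=3$ composition diagram of section~\ref{hcat} using the homotopy-associativity of composition asserted there, and $q([\id_M],-)=\id$ follows from the identity-morphism claim. The $\pi_\ast\E$-linearity is automatic because the whole construction is obtained from $\Omega$-module (hence, after smashing, $\E$-module) maps. The oriented case is word-for-word the same with $\BO,\Omega,\MO$ replaced by $\BSO,s\Omega,\MSO$.

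For the intersection pairing I would specialize further to $N=\ast$: the composition $\Omega(\ast,M)\wedge_\Omega\Omega(M,\ast)\to\Omega$, smashed with $\E$ and followed by the multiplication $\E\wedge_\Omega\E\to\E$, gives after $\pi_\ast$ a pairing
\[ \E_{\ast+m}(M)\otimes\E_{\ast+m}(\overline{M})\;\longrightarrow\;\E_\ast, \]
where I have used the identification $\Omega(M,\ast)\simeq\Omega(\ast,\overline{M})$ from Theorem~\ref{Weyl1} and the Thom isomorphism. When $M$ is compact, Claim~\ref{D} says $\Omega(\ast,M)$ is a dualizable $\Omega$-module with $\Omega$-dual $\Omega(M,\ast)$; hence $\Omega(\ast,M)\wedge_\Omega\E$ is a dualizable $\E$-module whose $\E$-dual is $\Omega(\ast,\overline{M})\wedge_\Omega\E$, and the pairing above is precisely the evaluation of this perfect $\E$-module duality. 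Since $\MO$, and therefore $\E$, is a module over $\Hrt$ (section~\ref{four}), applying $\pi_\ast$ to a perfect duality of $\E$-modules produces a non-degenerate pairing of graded $\pi_\ast\E$-modules, which is the last claim; in the oriented case one runs the same argument rationally, where $\pi_\ast s\Omega\otimes\Q$ is an exterior algebra and the Thom and duality arguments go through verbatim.

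The main obstacle — the reason this is only ``easy'' rather than immediate — is bookkeeping: one must pin down all the degree shifts in the several Thom isomorphisms so that source and target of $q$ genuinely match, and one must check that the smash-product pairing $\pi_\ast X\otimes\pi_\ast Y\to\pi_\ast(X\wedge_\Omega Y)$ is compatible both with the composition maps of $h\mathbb{S}$ and with the $\E$-module structures. Because all the spectra $\Omega(M,N)$ are generalized Eilenberg--Mac\,Lane spectra over $\Hrt$, there are no higher $\Tor$-obstructions in the unoriented case and this compatibility is formal; in the oriented case the same holds after rationalization. The one structural input that cannot be bypassed is Claim~\ref{D} (equivalently, Poincar\'e duality for $M$ compact), which is exactly what upgrades the tautological evaluation pairing to a non-degenerate one.
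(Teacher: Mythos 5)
Your proposal is correct and follows the route the paper intends — the paper itself offers essentially no proof, saying only that Theorem~\ref{rep} "is an easy consequence of \ref{Weyl1}," and your argument is a reasonable expansion of that remark: smash the composition map with $\E$ over $\Omega$, invoke Claim~\ref{thom} to identify the homotopy groups, and derive non-degeneracy for compact $M$ from the duality of Claim~\ref{D}. One small point worth making explicit: "perfect duality of $\E$-modules gives a non-degenerate pairing on $\pi_\ast$" needs not just the absence of $\Tor$-obstructions but also that $\E_{\ast+m}(M)$ be a finitely generated projective (in fact free) $\pi_\ast\E$-module, which indeed holds for compact $M$ over an $\Hrt$-algebra $\E$ (unoriented case) or rationally over an $\MSO$-algebra (oriented case) because $H_\ast(M)$ is then finitely generated free; you gesture at this but should state it rather than rely solely on the generalized Eilenberg--Mac\,Lane observation.
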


\section{The algebra of observables and the Sympletomorphism Group}\label{stab}
\noindent
We work with the category $h\mathbb{S}$ in this section. All constructions apply equally well for the oriented setting. Let $(M,\omega)$ be an object. It is our interest to motivate the claim that $\Omega(M,M)$ is a homotopical version of the ``algebra of observables" of $M$. In deformation quantization, the algebra of observables is a non-commutative deformation of the algebra of functions $\mbox{C}^\infty(M)$, compatible with the Poisson structure on $M$. Now it is well known that functions in $\mbox{C}^\infty(M)$ can be identified with a subspace of lagrangians in $T^\ast M$ by mapping a function $f$ to the graph of $df$. Hence, in our context, the (commutative) ring spectrum $\Omega(T^\ast M) \simeq \Omega \wedge M^{-\tau}$ is the analog of the ring $\mbox{C}^\infty(M)$ (see example \ref{cotang}). The analog of the algebra of observables should therefore be an (associative) ring spectrum which supports a map of associative spectra from $\Omega \wedge M^{-\tau}$. We shall demonstrate in section \ref{comp} (see remarks \ref{dq} and \ref{dq2}), that the spectrum $\Omega(M,M)$ is precisely such a spectrum. 

\medskip
\noindent
The purpose of this section is to extend the correspondence between $\mbox{C}^\infty(M)$ and $T^\ast M$, to a correspondence between the symplectomorphism group of $M$, $\Symp(M)$ and $\Omega(M,M)$. As before, this correspondence simply takes takes a symplectomorphism to its graph seen as a lagrangian in $\Omega(M,M)$. This will make sense if $M$ is a compact manifold. In fact, for a compact symplectic manifold $M$, we will show that the identification of a symplectomorphism with its graph can be de-looped to a canonical map: 
\[ \gamma : \BSymp(M) \longrightarrow \BGL  \Omega(M,M), \]
where $ \GL  \Omega(M,M)$ is defined as the space of components in the ring-space: $\Omega^\infty(\Omega(M,M))$, that represent units in the ring $\pi_0(\Omega(M,M))$.

\medskip
\noindent
Let us begin for the moment with an arbitrary symplectic manifold $(M^{2m},\omega)$. We will construct a map 
$ \gamma : \BSymp(M) \longrightarrow \BAut _\Omega(\Omega(M))$. If $M$ is compact, then by the results of section \ref{comp}, one has an $A_\infty$-equivalence $\Omega(M,M) \longrightarrow \End_\Omega(\Omega(M))$. In particular, may identify $\BGL \Omega(M,M)$ with $\BAut_\Omega(\Omega(M))$. 

\smallskip
\noindent
The first step in the construction of the map $\gamma$ is to observe that we may construct the spectrum $\underline{\mathcal{G}}(M)^{-\zeta}$ fiberwise over $\BSymp(M,\omega)$. More precisely, consider the space $\mathcal{J}(M)$ consisting of pairs $(J,m)$ with $m \in M$ and $J$ a compatible complex structure on $(M,\omega)$. This space fibers over the space of compatible complex structures on $(M,\omega)$, with fiber $M$. In particular, it is homotopy equivalent to $M$. The space $\mathcal{J}(M)$ supports a canonical unitary vector bundle $\mathcal{J}(\tau)$, whose fiber over a point $(m,J)$ is the tangent space $T_m(M)$ endowed with the complex structure $J$. Notice that the symplectomorphism group acts on the space $\mathcal{J}(\tau)$ by unitary bundle automorphisms. It follows that $\mathcal{J}(\tau)$ extends to a unitary vector bundle $\Jb(\tau)$ over the associated bundle:
\[  \Jb(M) := \ESymp(M,\omega) \times_{\Symp} \mathcal{J}(M). \]
Consider the pullback $\underline{\mathcal{G}}(\Jb(M))$ fibering over $\BSymp(M)$ defined as a pullback:
\[
\xymatrix{
\underline{\mathcal{G}}(\Jb(M))     \ar[d]^{\xi} \ar[r]^{\Jb(\zeta)} & \Z \times \BO \ar[d] \\
 \Jb(M)    \ar[r]^{\Jb(\tau)}& \Z \times \BU
}
\]
Notice that the restriction of the bundle $\Jb(\tau)$ to any any subspace $(M,J)$ in the fiber over $\BSymp(M)$ yields the tangent bundle $TM$ with complex structure $J$. Similarly, the restriction of $\Jb(\zeta)$ to $\underline{\mathcal{G}}(M)$ is the bundle $\zeta$. 

\medskip
\noindent
One may construct the fiberwise Thom spectrum: $\underline{\mathcal{G}}(\Jb(M))^{-\Jb(\zeta)}$ is a bundle of $\Omega$-module spectra over $\BSymp(M)$, with fiber being homotopy equivalent to $\Omega(M)$. We may classify this bundle by a map: $\gamma : \BSymp(M) \longrightarrow \BAut_\Omega(\Omega(M))$. 

\medskip
\noindent
Now if $M$ is a compact manifold, then recall that $\Omega(M,M)$ is equivalent to $\End_\Omega(\Omega(M))$ as $A_\infty$-ring spectra. Hence for a compact symplectic manifold $(M,\omega)$, one gets a map: 
\[\gamma : \BSymp(M) \longrightarrow  \BGL  \Omega(M,M). \]

\medskip
\noindent
It remains to identify the map $\Omega(\gamma) : \Symp(M) \longrightarrow \GL  \Omega(M,M)$ as the map that takes an element $\varphi \in \Symp(M)$ to its graph in $\Omega(M,M)$. Let us push forward to $\Aut_\Omega(\Omega(M))$. The composite map:
\[ \Omega(\gamma) : \Symp(M) \longrightarrow \Aut_\Omega(\Omega(M)), \]
is easily seen to be the map that sends an element $\varphi \in \Symp(M)$ to the left action of $\varphi$ on $\Omega(M)$ which has the effect of sending a lagrangian immersion $L \looparrowright M$ to the immersion $\varphi(L) \looparrowright M$. We need to identify this left action with the graph of $\varphi$ as an element in $\Omega(M,M)$. Now the graph of a symplectomorphism $gr(\varphi) \in \overline{M} \times M$, and an arbitrary lagrangian $L$ in $M$, it is easy to check that the product $L \times gr(\varphi)$ is always transversal to the submanifold $ \Delta(M) \times M \subset  M \times \overline{M} \times M$. It follows easily from this that the graph of $\varphi$ acts exactly as the left action of $\varphi$ under the composition $\Omega(\ast,M) \wedge_\Omega \Omega(M,M) \longrightarrow \Omega(\ast, M)$. Details are left to the reader. 

\begin{remark}
Recall that we have a canonical equivalence: $\Omega(M \times \C) \simeq \Sigma^{-1} \Omega(M)$. It follows from this observation that the map $\gamma : \BSymp(M) \longrightarrow \BAut_\Omega(\Omega(M))$ factors through $\BSymp_s(M)$, where we define $\Symp_s(M)$ as the stabilization of the symplectomorphism group:
\[ \Symp_s(M) = \colim_k \, \Symp(M \times \C^k). \]

\end{remark}

\newpage

\section{The Stable Symplectic category as an $A_\infty$-category: Background} \label{A-mor}

\noindent
From now on we will be interested in turning the stable symplectic homotopy category into an $A_\infty$-category. This category will be enriched over a suitable category of structured spectra so that one can actually perform composition on the point-set level as it were, and not just up to homotopy. The framework that works best for our needs is that of parametrized $\mbox{S}$-modules \cite{EKMM}, \cite{MS}. This framework gives us an honest monoidal smash product on the category of spectra, and allows us to define ring spectra that are rigid enough to allow one to perform standard algebraic constructions. We shall use the language of \cite{EKMM} freely from this point on, with apologies to the reader unfamiliar with it. The virtue of this language is that it allows us to describe the stable symplectic category as an $A_{\infty}$-category. The reader unfamiliar with this language may safely work at the homotopical level and ignore the decoration by the operad $\mathcal{L}$ throughout. Let us briefly recall the basic setup:

\smallskip
\noindent
The spectra we study in this article are naturally indexed on the universe of euclidean subspaces of $\C^\infty$. As such there are two natural choices of isometries that act on these subspaces:

\begin{defn}
The space of linear isometries $\mathcal{L}^\R(k)$ is defined as $\mathcal{L}^\R(k) = \mathcal{I}((\R^{\infty})^k, \R^{\infty})$, 
where $\mathcal{I}((\R^{\infty})^k, \R^{\infty})$ denotes the contractible space of isometries from $k$-copies of $\R^\infty$, to $\R^\infty$. Similarly, define $\mathcal{L}^\C(k) = \mathcal{I}((\C^{\infty})^k, \C^{\infty})$ as the space of unitary isometries from the Hermitian space $(\C^\infty)^k$, to $\C^\infty$. Recall \cite{EKMM} that the spaces $\mathcal{L}^\R(k)$, and $\mathcal{L}^\C(k)$ naturally form an $E_\infty$-operad. Notice also that there is a natural inclusion of operads given by complexification: $\mathcal{L}^\R(k) \subset \mathcal{L}^\C(k)$. 
\end{defn}

\smallskip
\noindent
Composition of isometries turns the spaces $\mathcal{L}^\R(1)$ and $\mathcal{L}^\C(1)$ into monoids. Since the spectra we study are naturally indexed on the universe of euclidean subspaces of $\C^\infty$, these monoids act on this collection of subspaces. The following claim is straightforward:

\begin{claim}
Let $\BU(k)$ denote the classifying space of $\U(k)$ defined as the Grassmannian of complex $k$-planes in $\C^\infty$. Consider the map $\pi : \mathcal{L}^\C(1) \longrightarrow \BU(k)$, that sends an isometry to its image on the standard subspace $\C^k \subset \C^\infty$. Then $\pi$ is a principal bundle, with fiber given by the sub-monoid $\mathcal{L}_m^\C(1) \times \U(k) \subseteq \mathcal{L}^\C(1)$, where $\mathcal{L}_m^\C(1)$ is the space of isometries that fix the standard subspace $\C^k$ pointwise. Similarly, one has a principal $\mathcal{L}_m^\R(1) \times \Or(k)$ bundle $\mathcal{L}^\R(1) \longrightarrow \BO(k)$. 
\end{claim}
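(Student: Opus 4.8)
The plan is to treat the map $\pi:\mathcal L^\C(1)\longrightarrow \BU(k)$ as the orbit map of a free action and identify its fiber. First I would observe that $\mathcal L^\C(1)=\mathcal I((\C^\infty)^?,\C^\infty)$ — here the relevant copy is $\mathcal I(\C^\infty,\C^\infty)$, the space of unitary isometries $\C^\infty\to\C^\infty$, which is contractible (this is the standard ``infinite-dimensional Stiefel space is contractible'' fact underlying the $E_\infty$-operad structure). The map $\pi$ sends an isometry $\phi$ to the $k$-plane $\phi(\C^k)\subset\C^\infty$. To see this is surjective with the asserted local triviality, I would exhibit local sections: given a $k$-plane $V$ near the base plane $\C^k$, choose (continuously in $V$) a unitary isometry of $\C^\infty$ carrying $\C^k$ to $V$; graph-of-projection constructions give such a local section over a neighborhood of any plane, and equivariance propagates it, so $\pi$ is a fiber bundle.

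Next I would identify the fiber and the structure group. The fiber over the base point $\C^k\in\BU(k)$ is $\pi^{-1}(\C^k)=\{\phi\in\mathcal L^\C(1): \phi(\C^k)=\C^k\}$, i.e. isometries of $\C^\infty$ preserving the splitting $\C^\infty = \C^k\oplus (\C^k)^\perp$ setwise on the first summand. Such a $\phi$ decomposes as a pair: a unitary automorphism of $\C^k$ (giving the $\U(k)$ factor) and an isometry of $\C^\infty$ that fixes $\C^k$ pointwise (the submonoid $\mathcal L_m^\C(1)$), since once the image of $\C^k$ is pinned down to $\C^k$ itself, the remaining data on the orthogonal complement is exactly such a ``based'' isometry. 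I would check that the product decomposition $\mathcal L_m^\C(1)\times\U(k)\hookrightarrow\mathcal L^\C(1)$ lands in $\pi^{-1}(\C^k)$ and is a homeomorphism onto it, and that these submonoids commute appropriately so the product really is the fiber as a sub-monoid; this makes $\pi$ a principal bundle for the topological monoid $\mathcal L_m^\C(1)\times\U(k)$ acting by precomposition. The real (orthogonal) statement is then literally the same argument with $\C$ replaced by $\R$ everywhere, using $\mathcal I(\R^\infty,\R^\infty)$ contractible and $\Or(k)$ in place of $\U(k)$; alternatively it follows by restricting along the complexification inclusion $\mathcal L^\R(1)\subset\mathcal L^\C(1)$ and the corresponding $\BO(k)\to\BU(k)$.

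The main obstacle is the local triviality / existence-of-local-sections step: one has to produce continuous local sections of $\pi$ near an arbitrary $k$-plane, which amounts to continuously choosing a unitary isometry of $\C^\infty$ taking $\C^k$ to a given nearby plane $V$. The clean way is to note that the orthogonal projection $\C^k\to V$ is an isomorphism for $V$ close to $\C^k$, compose with its polar decomposition to get a canonical unitary $\C^k\xrightarrow{\sim}V$, extend by the corresponding unitary on the orthogonal complements (again continuous in $V$ since both complements vary continuously and their intersection with any fixed finite-dimensional reference space is controlled), and thereby build the section. This is the only genuinely analytic point; everything else — surjectivity, the identification of the fiber as $\mathcal L_m^\C(1)\times\U(k)$, and the transfer to the real case — is formal. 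I would close by remarking that since $\mathcal L^\C(1)$ is contractible, $\pi$ realizes $\BU(k)$ as the classifying space of the monoid $\mathcal L_m^\C(1)\times\U(k)$, consistent with $\mathcal L_m^\C(1)$ being contractible.
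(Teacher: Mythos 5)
Your argument is correct and is exactly the ``straightforward'' verification the paper omits: identify the fiber over $\C^k$ as the isometries preserving $\C^k$, split such an isometry into its $\U(k)$-part on $\C^k$ and its $\mathcal{L}_m^\C(1)$-part on the complement, and produce local sections by polar decomposition of the projection onto a nearby plane. The one point worth making explicit is that your polar-decomposition sections land in the \emph{invertible} (unitary) isometries, which is what guarantees the trivialization $U\times(\mathcal{L}_m^\C(1)\times\U(k))\to\pi^{-1}(U)$, $(V,g)\mapsto s(V)\circ g$, is onto --- necessary here since the fiber is only a monoid and one cannot invert a general element of $\mathcal{L}^\C(1)$.
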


\smallskip
\noindent
Now let $(M^{2m},\omega)$ denote a symplectic manifold endowed with a compatible complex structure. Assume we are given a map $\tau : M \longrightarrow \BU(m)$ classifying the tangent bundle.
\begin{defn}
Define the extended frame bundle $\pi : \mathcal{F}(M) \longrightarrow M$ as the pullback:
\[
\xymatrix{
\mathcal{F}(M)    \ar[r] \ar[d]^\pi & \mathcal{L}^\C(1) \ar[d]^{\pi} \\
M \ar[r]^{\tau \quad} & \BU(m). 
}
\]
In particular, $\mathcal{F}(M)$ is a principal $\mathcal{L}_m^\C(1) \times \U(m)$-bundle that admits a compatible map to $\mathcal{L}^\C(1)$. 

\end{defn}

\noindent
Let $\mathbb{L}^\R$, and $\mathbb{L}^\C$ be the monads generated by sending a spectrum $X$ indexed over the universe $\C^\infty$ to the ``free spectrum" $\mathbb{L}^\R(X) := \mathcal{L}^\R(1) \ltimes X$, and $\mathbb{L}^\C(X) = \mathcal{L}^\C(1) \ltimes X$ respectively, as described in \cite{EKMM}. Let $\mathbb{L}$ stand for either $\mathbb{L}^\C$ or $\mathbb{L}^\R$. We shall say that a spectrum $X$ is an $\mathbb{L}$-spectrum (or an $\mathcal{L}(1)$-spectrum) if it is an algebra over the monad $\mathbb{L}$. By definition, an $\mathbb{L}$-spectrum admits a map of spectra $\xi : \mathbb{L}(X) \longrightarrow X$ that makes the following diagram commute:
\[
\xymatrix{
\mathbb{L} \mathbb{L}(X)     \ar[r]^{\mathbb{L}(\xi)} \ar[d]^{\mu} & \mathbb{L}(X) \ar[d]^{\xi} \\
\mathbb{L}(X)  \ar[r]^{\xi} & X, 
}
\]
where $\mu$ is the monad structure on $\mathbb{L}$. A standard example of an $\mathbb{L}$-spectrum (or algebra over the monad $\mathbb{L}$) is given by the sphere spectrum indexed on $\C^\infty$, which we henceforth denote by $\mbox{S}$. Given two $\mathbb{L}$-spectra $X$ and $Y$, we shall borrow the notation from \cite{EKMM}: 
\[ X \wedge_\mathcal{L} Y := \mathcal{L}(2) \ltimes_{\mathcal{L}(1) \times \mathcal{L}(1)} (X \wedge Y). \] 
A final piece of notation is that of an $\mbox{S}$-module. Given an $\mathbb{L}$-spectrum $X$, we say $X$ is an $\mbox{S}$-module, if the following canonical map is an isomorphism of $\mathbb{L}$-spectra: $\lambda : \mbox{S} \wedge_{\mathcal{L}} X \longrightarrow X$. An important example of an $\mbox{S}$-module is the sphere spectrum $\mbox{S}$ itself. In particular, given any $\mathbb{L}$-spectrum $X$, the spectrum $\mbox{S} \wedge_\mathcal{L} X$ is an $\mbox{S}$-module.

\begin{conv}
Since the decoration by $\R$ and $\C$ can introduce unnecessary clutter, let us set some notation going forward. Unless otherwise stated, the notation $\mathbb{L}$ will denote $\mathbb{L}^\C$. In constrast, $\mathcal{L}(k)$ will denote $\mathcal{L}^\R(k)$. In the the other cases, we continue to use the notation $\mathbb{L}^\R$ and $\mathcal{L}^\C(k)$. 
\end{conv}

\medskip
\noindent
{\bf The $\mathcal{L}(1)$-spectrum $(\UO)^{-\zeta}$}

\medskip
\noindent
An important example of an $\mathcal{L}(1)$-spectrum is the spectrum $(\UO)^{-\zeta}$. Let us describe the explicit model of $(\UO)^{-\zeta}$ that we will use. Given a $k$ dimensional subspace $V \subset \R^\infty$, let $V_\C$ denote its complexification: $(V \oplus iV) \subset \C^\infty$. The Grassmannian of lagrangian planes in $V_\C$ can be identified with the homogeneous space $\U(V_\C)/\Or(V)$, seen as the orbit of the group $\U(V_\C)$ of unitary transformations on $V_\C$ acting on the standard lagrangian $V \subset V_\C$. The stabilizer of $V$ under this action is the group $\Or(V)$ of orthogonal transformations of $V$. This Grassmannian supports a universal vector bundle $\zeta_V$ whose fiber over a lagrangian subspace $L \in \U(V_\C)/\Or(V)$, is the space of vectors in $L$. This vector bundle $\zeta_V$ includes into the trivial bundle $V_\C$, and let $\eta_V$ denote its normal bundle. 

\medskip
\noindent
We define $\U(V_\C)/\Or(V)^{-\zeta_V}$ as the desuspended Thom spectrum of the bundle $\eta_V$ defined as $\Sigma^{-V_\C} \U(V_\C)/\Or(V)^{\eta_V} := \mbox{S}^{-V_\C} \wedge \U(V_\C)/\Or(V)^{\eta_V}$, where $ \mbox{S}^{-V_\C}$ is the spectrum representing the functor that evaluates an arbitrary spectrum on the vector space $V_\C$ \cite{EKMM}. 
Now notice that if $W \subseteq V$ is an inclusion of subspaces in $\R^\infty$, with $Z$ being the complement of $W$ in $V$, we may take the sum of a lagrangian space in $W_\C$, with $Z$ to get a lagrangian subspace in $V_\C$. This yields a map $\U(W_\C)/\Or(W) \longrightarrow \U(V_\C)/\Or(V)$. Furthermore, the restriction of $\zeta_V$ along this map is the bundle $\zeta_W \oplus Z$. It follows that we have a canonical map $\Sigma^W \U(W_\C)/\Or(W)^{-\zeta_W} \longrightarrow \Sigma^V \U(V_\C)/\Or(V)^{-\zeta_V}$ which is compatible with respect to inclusions. 

\begin{remark}
It is important to observe that an element $\varphi \in \mathcal{L}(1)$ naturally identifies the homogeneous space $\U(V_\C)/\Or(V)$ with $\U(\varphi(V)_\C)/\Or(\varphi(V))$. This extends to an action of $\mathcal{L}(1)$ on the collection of homogeneous spaces, and therefore on the corresponding Thom spectra described above. Notice in contrast, that there is no such action of the monoid $\mathcal{L}^\C(1)$ on the collection of homogeneous spaces, since $\mathcal{L}^\C(1)$ does not preserve subspaces of the form $V_\C$. 
\end{remark}

\begin{defn}
Define $(\UO)^{-\zeta}$ as the directed colimit of Thom-spectra:
\[ (\UO)^{-\zeta} = \colim_{V} \Sigma^V  \U(V_\C)/\Or(V)^{-\zeta_V}, \]
where the colimit runs over the poset of all finite dimensional subspaces $V$ of $\R^\infty$. It is clear from the construction that $(\UO)^{-\zeta}$ is an $\mathcal{L}(1)$-spectrum. In the this section, $\Omega$ is defined to be the $\mbox{S}$-module $\Omega := \mbox{S} \wedge_{\mathcal{L}} (\UO)^{-\zeta}$. It is also easy to see that $\Omega$ is in fact a commutative $\mbox{S}$-algebra \cite{EKMM}. 
\end{defn}

\smallskip
\noindent
Now, given a symplectic manifold $(M^{2m},\omega)$ endowed with a compatible complex structure and a classifying map $\tau : M \longrightarrow \BU(m)$, let us define:
\begin{defn}
By virtue of the map $\mathcal{F}(M) \longrightarrow \mathcal{L}^\C(1)$, we may define a fiberwise $\mathbb{L}$-spectrum $\mathcal{S}(M)$ as the spectrum parametrized over $\mathcal{E}(M)$: 
\[ \mathcal{S}(M) = \mathcal{L}^\C(1) \ltimes_{\mathcal{L}_m^\C(1) \times \U(m)} (\mathcal{F}(M) \ltimes \mbox{S}^{-m}),\]
with $\mbox{S}^{-m}$ denoting the desuspended sphere spectrum $\mbox{S}^{-\R^m}$. 
\end{defn}

\noindent
In the above discussion we required a choice of map $\tau : M \longrightarrow \BU(m)$ representing the tangent bundle of $M$. The universal construction should therefore be made over the space of all unitary injections (see remark \ref{unit}). We define $\mathcal{E}(M)$ to be the (contractible) space of all unitary bundle maps $TM \longrightarrow M \times \C^\infty$ that factor through some $\C^k$. One therefore has a canonical bundle $\mathcal{F}(M)$ over $\mathcal{E}(M)$. 

\begin{defn}
Define the $\Omega$-module spectrum parametrized over $\mathcal{E}(M)$ by: 
\[ \mathbb{G}(M) = \mathcal{S}(M) \wedge_{\mathcal{L}} \Omega := \mathcal{L}(2) \ltimes_{\mathcal{L}(1) \times \mathcal{L}(1)} (\mathcal{S}(M) \wedge \Omega). \]
Let $\pi^M$ denote the map that collapses $\mathcal{E}(M)$ to a point, inducing a functor $\pi^M_!$ from the category of spectra parametrized over $\mathcal{E}(M)$, to spectra indexed over $\C^\infty$ \cite{MS}. In consistency with previous notation, define the spectrum $\Omega(M)$: 
\[ \Omega(M) := \pi^M_! (\mathbb{G}(M)). \]
Furthermore, by the natural adjunctions of the functor $\pi^M_!$ (\cite{MS}, Ch.11), it is clear that $\Omega(M)$ is a (usual) $\Omega$-module spectrum. 
\end{defn} 

\noindent
The next claim demonstrates that the spectrum we have constructed above is faithful to the geometric object we studied in previous sections. 
\begin{claim}
The space $\Omega^{\infty}(\Omega (M))$ has the weak homotopy type of the space of stable totally real immersions into $M$. If the monotonicity condition holds for $M$, then this is indeed the space of stable lagrangian immersions.  
\end{claim}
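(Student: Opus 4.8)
The plan is to show that the parametrized $\mbox{S}$-module construction of $\Omega(M)$ recovers, up to weak equivalence, the Thom spectrum $\underline{\mathcal{G}}(M)^{-\zeta}$ of Definition \ref{stablag}, and then invoke the geometric interpretation already established there (via claim \ref{moduli} and the discussion of the geometric meaning of lagrangian stabilization). So the proof splits into two parts: a purely homotopy-theoretic comparison of two models for the same spectrum, and a reduction to the earlier geometric statements.

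First I would unwind the definitions. The spectrum $\mathcal{S}(M) = \mathcal{L}^\C(1) \ltimes_{\mathcal{L}_m^\C(1) \times \U(m)} (\mathcal{F}(M) \ltimes \mbox{S}^{-m})$ is, after applying $\pi^M_!$ and forgetting the $\mathcal{L}(1)$-action, a model for the Thom spectrum $M^{-\tau}$: this is essentially the observation that $\mathcal{F}(M) \to \mathcal{E}(M) \times M$ is the extended frame bundle and $\mathcal{F}(M)/(\mathcal{L}_m^\C(1) \times \U(m))$ is weakly equivalent to $M$ (the space $\mathcal{E}(M)$ being contractible). Smashing with $\Omega = \mbox{S} \wedge_{\mathcal{L}} (\UO)^{-\zeta}$ over $\mathcal{L}$ then produces, after $\pi^M_!$, a spectrum weakly equivalent to $M^{-\tau} \wedge \Omega = M^{-\tau} \wedge (\UO)^{-\zeta}$. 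The key point is that this agrees with the Thom spectrum $\underline{\mathcal{G}}(M)^{-\zeta}$ of remark \ref{hpty}: indeed, since $\Z \times \BO \to \Z \times \BU$ is a principal $\UO$-bundle up to homotopy, the pullback $\underline{\mathcal{G}}(M)$ is (up to homotopy) the total space $M \times_{\Z \times \BU} (\Z \times \BO) \simeq M \times \UO$ when $\tau$ lifts — and in general the Thom spectrum of $-\zeta$ over $\underline{\mathcal{G}}(M)$ is a twisted smash product of $M^{-\tau}$ with $(\UO)^{-\zeta}$, which is exactly what the parametrized construction produces by tracking the $\tau$-dependence through $\mathcal{F}(M)$. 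I would make this precise by comparing the two as $\Omega$-module spectra, using that both are built from the same Grassmannian-of-lagrangians data; the earlier equivalence $\underline{\mathcal{G}}(M) \times_{\UO} \underline{\mathcal{G}}(N) \simeq \underline{\mathcal{G}}(M\times N)$ and its consequence $\mu$ can be used as a consistency check.

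Once the identification $\Omega(M) \simeq \underline{\mathcal{G}}(M)^{-\zeta} = \mbox{colim}_n \Sigma^n {\mathcal{G}}(\tau\oplus\C^n)^{-\zeta_n}$ is in hand, the geometric conclusion is essentially a restatement of earlier results. By the discussion following Definition \ref{stablag}, $\Omega^{\infty-n}({\mathcal{G}}(\tau\oplus\C^n)^{-\zeta_n})$ is the moduli space of manifolds $L^{m+n} \subset \R^\infty\times\R^n$ properly fibered over $\R^n$ and endowed with a $\zeta_n$-structure, and by claim \ref{moduli} (together with remark \ref{totreal} in the non-monotone case) a $\zeta_n$-structure is the same as a totally real — respectively lagrangian — immersion $L \looparrowright M\times\C^n$. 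Passing to the colimit over $n$ along the lagrangian-stabilization maps $\varphi_{n_1,n_2}$, whose geometric effect is $E \mapsto E \times \R^{n_2-n_1}$, yields precisely the space of \emph{stable} totally real (resp. lagrangian) immersions into $M$, and $\Omega^\infty$ commutes with this filtered colimit of infinite loop spaces up to weak equivalence. This gives the claim.

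The main obstacle I expect is the homotopy-theoretic bookkeeping in the first part: matching the parametrized $\mbox{S}$-module $\pi^M_!(\mathcal{S}(M)\wedge_{\mathcal{L}}\Omega)$ with the naive Thom spectrum $\underline{\mathcal{G}}(M)^{-\zeta}$ requires being careful that the $\mathcal{L}(1)$-operad action, the desuspensions $\mbox{S}^{-m}$ and $\mbox{S}^{-V_\C}$, and the various colimits (over $V \subset \R^\infty$ defining $(\UO)^{-\zeta}$, and over the universe indexing the parametrized spectra) all fit together coherently, and that $\pi^M_!$ really does compute the expected homotopy colimit since $\mathcal{E}(M)$ is contractible. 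The geometric input (claim \ref{moduli}, the Ayala-style cobordism-category interpretation, and the $h$-principle) is already granted, so once the model comparison is done the rest follows formally; I would therefore concentrate the write-up on establishing the weak equivalence of spectra and relegate the geometric translation to a reference to the earlier section.
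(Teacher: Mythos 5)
Your proposal follows essentially the same route as the paper: both arguments reduce the claim to identifying $\pi^M_!(\mathbb{G}(M))$, fiberwise over $M$, with the Thom spectrum $\underline{\mathcal{G}}(M)^{-\zeta}$ of remark \ref{hpty} (the paper does this by fixing a map $\tau : \mathcal{F}(M) \to \mathcal{L}^\C(1)$ and an isometry $\gamma$, and recognizing the fiber over $x \in M$ as the Thom spectrum of the negative canonical bundle on the lagrangian Grassmannian of $T_x(M)\oplus\C^\infty$), and then cite the geometric interpretation of that spectrum from section \ref{two}. One caution: your intermediate assertion that $\pi^M_!$ of the fiberwise smash yields the \emph{untwisted} product $M^{-\tau}\wedge(\UO)^{-\zeta}$ is false unless $\tau$ lifts to $\Z\times\BO$; your subsequent correction to the twisted, fiberwise description is the version that actually matches the paper's proof and should be the one you write down.
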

\begin{proof}
Let us fix a choice of the map $\tau : \mathcal{F}(M) \longrightarrow \mathcal{L}^\C(1)$. Let us also fix an invertible isometry $\gamma : \R^\infty \times \R^\infty \longrightarrow \R^\infty$. The map $\tau$ above, along with $\gamma$ induce a weak homotopy equivalences over $M$:
\[ \{\gamma \} \ltimes ((\{ \tau \} \ltimes \mbox{S}^{-m}) \wedge \Omega) \longrightarrow \mathbb{G}(M). \]
Now by the definition of the pullback diagram definint the map $\tau$, we see that the fiber of the bundle $\mathcal{F}(M)$, over a point $x \in M$ can be identified with isometries that map $\C^k$ to $T_x(M)$. Hence the bundle $(\{ \tau \} \ltimes \mbox{S}^{-m}) \wedge (\UO)^{-\zeta}$, maps canonically via a homotopy equivalence, to the the bundle of spectra whose fiber at $x$ is the Thom spectrum of the (negative) canonical bundle on the space of Lagrangians in $T_x(M) \oplus \C^\infty$. Taking the push forward along $\pi^M_!$, we recover (up to homotopy) the spectrum $\underline{\mathcal{G}}(M)^{-\zeta}$ studied in the previous section, thereby proving the claim. 
\end{proof}

\medskip
\noindent
{\bf The internal product:}

\medskip
\noindent
Now let $(M^{2m},\omega)$ and $(N^{2n},\eta)$ be two symplectic manifolds endowed with the relevant structure. Our next objective is to describe a natural map from the product of the spectra $\Omega(M)$ and $\Omega(N)$ to the spectrum: $\Omega(M \times N)$. 

\noindent 
By definition it is easy to see that one has an equality:
\[ \mathbb{G}(M) \wedge_{\mathcal{L}} \mathbb{G}(N) = \mathcal{L}(4) \ltimes_{\mathcal{L}(1) \times \mathcal{L}(1) \times \mathcal{L}(1) \times \mathcal{L}(1)} (\mathcal{S}(M) \wedge \mathcal{S}(N) \wedge \Omega \wedge \Omega). \]
We may write the right hand side as: 
\[ (\mathcal{L}(2) \ltimes_{\mathcal{L}(1) \times \mathcal{L}(1)} (\mathcal{S}(M) \wedge \mathcal{S}(N)) \wedge_{\mathcal{L}} (\Omega \wedge_{\mathcal{L}} \Omega). \]

\medskip
\noindent
Assume we are given an orthogonal {\em isomorphism} $\gamma : \R^\infty \times \R^\infty \longrightarrow \R^\infty$, 
which identifies the subspace: $\R^m \times \R^n$ with $\R^{(m+n)}$ diagonally. Notice that the complexification of $\gamma$ induces an obvious map $\mathcal{E}(\gamma) : \mathcal{E}(M) \times \mathcal{E}(N) \longrightarrow \mathcal{E}(M \times N)$. Furthermore, the map $\gamma$ induces a map of bundles:
\[
\xymatrix{
\mathcal{F}(M) \times \mathcal{F}(N)      \ar[d] \ar[r]^{\gamma_\ast} &  \mathcal{F}(M \times N) \ar[d] \\
\mathcal{E}(M) \times \mathcal{E}(N)     \ar[r]^{\quad \mathcal{E}(\gamma)} &  \mathcal{E}(M \times N),
}
\]
compatible with the ``diagonal" maps of monoids:
\[ \gamma_\ast : \U(m) \times \U(n) \longrightarrow \U(m+n), \quad \quad \gamma_\ast : \mathcal{L}_m^\C(1) \times \mathcal{L}_n^\C(1) \longrightarrow \mathcal{L}_{(m+n)}^\C(1). \]
The map $\gamma_\ast$ is easily seen to induce a map of spectra parametrized over the map $\mathcal{E}(\gamma)$: 
\[ \gamma_\ast : \mathcal{L}(2) \ltimes_{\mathcal{L}(1) \times \mathcal{L}(1)} (\mathcal{S}(M) \wedge \mathcal{S}(N)) \longrightarrow \mathcal{L}(1) \ltimes_{\mathcal{L}(1)} \mathcal{S}(M \times N), \]
which extends to a map of $\Omega$-spectra parametrized over $\mathcal{E}(\gamma)$:
\[ \mathbb{G}(\gamma)_\ast : \mathbb{G}(M) \wedge_\Omega \mathbb{G}(N) \longrightarrow \mathbb{G}(M \times N). \]

\begin{claim} \label{int}
Given two symplectic manifolds, and a map $\gamma$ as above, the map $\gamma_\ast$ induces a homotopy equivalence of $\Omega$-module spectra called the internal product: 
\[ \gamma_\ast : \Omega(M) \wedge_{\Omega} \Omega(N) \longrightarrow \Omega(M \times N) := \pi^M_!(\mathbb{G}(M \times N)). \] 
\end{claim}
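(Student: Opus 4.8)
The plan is to reduce the claim to the already-established unstable/fiberwise statement and to the behaviour of the parametrized pushforward $\pi^M_!$ under the internal smash product. First I would record that, by the final claim of the previous subsection, the spectra $\Omega(M)$, $\Omega(N)$ and $\Omega(M\times N)$ have the homotopy types of the Thom spectra $\underline{\mathcal{G}}(M)^{-\zeta}$, $\underline{\mathcal{G}}(N)^{-\zeta}$ and $\underline{\mathcal{G}}(M\times N)^{-\zeta}$ studied in section \ref{two}. So it suffices to check that the map $\gamma_\ast$ constructed above, after passing to homotopy types, agrees (up to homotopy) with the equivalence
\[ \mu : \underline{\mathcal{G}}(M)^{-\zeta} \wedge_\Omega \underline{\mathcal{G}}(N)^{-\zeta} \simeq \underline{\mathcal{G}}(M\times N)^{-\zeta} \]
coming from the $\UO$-space equivalence $\underline{\mathcal{G}}(M) \times_{\UO} \underline{\mathcal{G}}(N) \simeq \underline{\mathcal{G}}(M\times N)$ recorded in section \ref{two} (right after remark \ref{hpty}). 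That equivalence is exactly fiberwise direct sum of lagrangian planes, which is visibly what $\psi$ and the map $\gamma_\ast$ on frame bundles encode, so the identification of the two maps is a diagram chase rather than a new geometric input.

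The key steps, in order. (1) Unwind the definition: $\Omega(M)\wedge_\Omega \Omega(N) = \pi^M_!(\mathbb{G}(M))\wedge_\Omega \pi^N_!(\mathbb{G}(N))$, and use the monoidality of $\pi_!$ under external smash over the projections $\mathcal{E}(M)\times\mathcal{E}(N)\to \mathrm{pt}$ (\cite{MS}, Ch.~11) to rewrite this as $(\pi^M\times\pi^N)_!\big(\mathbb{G}(M)\wedge_\Omega \mathbb{G}(N)\big)$, a pushforward over $\mathcal{E}(M)\times\mathcal{E}(N)$. (2) Apply $(\pi^M\times\pi^N)_!$ to the parametrized map $\mathbb{G}(\gamma)_\ast$ and use that $\pi^{M\times N}\circ \mathcal{E}(\gamma) \simeq \pi^M\times\pi^N$ to land in $\pi^{M\times N}_!\,\mathcal{E}(\gamma)_!\,\mathbb{G}(M\times N)$; since $\mathcal{E}(\gamma)$ is a map into the contractible space $\mathcal{E}(M\times N)$, the unit $\mathbb{G}(M\times N)\to \mathcal{E}(\gamma)_!\,\mathcal{E}(\gamma)^*\mathbb{G}(M\times N)$ is an equivalence, giving the target $\Omega(M\times N)$. (3) Show $\mathbb{G}(\gamma)_\ast$ is itself a fiberwise equivalence over $\mathcal{E}(\gamma)$: fiberwise it is the map on Thom spectra of lagrangian grassmannians induced by $(L_1,L_2)\mapsto L_1\oplus L_2$ under the identification $T_xM\oplus\C^\infty \oplus T_yN\oplus\C^\infty \cong T_{(x,y)}(M\times N)\oplus\C^\infty$ provided by $\gamma$; this is exactly the stabilization equivalence of Definition \ref{stablag}/the $\UO$-space equivalence, and one checks it respects the $\zeta$-bundles because $\gamma$ was chosen to identify $\R^m\times\R^n$ with $\R^{m+n}$ diagonally, so the stated $\zeta\oplus\zeta$-vs-$\zeta$ compatibility holds on the nose. (4) Conclude that $\pi_!$ of a fiberwise equivalence is an equivalence, so $\gamma_\ast$ is an equivalence; and record that it is a map of $\Omega$-module spectra because all the constructions $\mathbb{G}(-)$, $\wedge_\Omega$, and $\pi_!$ are $\Omega$-linear and $\gamma_\ast$ was built from $\Omega$-linear data.

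I expect the main obstacle to be step (2)--(3): being careful that the orthogonal isomorphism $\gamma$ (rather than a mere isometry) is genuinely needed so that the induced $\mathcal{E}(\gamma)$ lands in the right place and the ``diagonal'' submonoid maps $\mathcal{L}^\C_m(1)\times\mathcal{L}^\C_n(1)\to\mathcal{L}^\C_{m+n}(1)$ and $\U(m)\times\U(n)\to\U(m+n)$ are compatible with the parametrized structures, so that $\mathbb{G}(\gamma)_\ast$ is defined and fiberwise an equivalence. Once the bookkeeping of parametrized spectra over $\mathcal{E}(M)\times\mathcal{E}(N)$ is set up correctly, the identification with $\mu$ of section \ref{two} is formal, and the $\Omega$-linearity is immediate from the construction; I would leave the remaining diagram chases to the reader.
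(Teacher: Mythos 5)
Your argument follows the paper's proof: both reduce the claim to the observation that $\mathbb{G}(\gamma)_\ast$ is a fiberwise equivalence of parametrized $\Omega$-module spectra together with the compatibility of the pushforward $\pi_!$ with the smash product over $\Omega$. The only presentational difference is that the paper justifies the identification $\Omega(M)\wedge_{\Omega}\Omega(N)\simeq \pi_!\bigl(\mathbb{G}(M)\wedge_{\Omega}\mathbb{G}(N)\bigr)$ explicitly via the coequalizer presentation of $\wedge_{\Omega}$ (using that $\pi_!$ preserves colimits and takes $\wedge_{\mathcal{L}}$ of parametrized spectra to $\wedge_{\mathcal{L}}$ of their pushforwards), where you cite monoidality of $\pi_!$ under external smash --- the same fact.
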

\begin{proof}
By construction $\gamma_\ast : \mathbb{G}(M) \wedge_\Omega \mathbb{G}(N) \longrightarrow \mathbb{G}(M \times N)$ is a fiberwise homotopy equivalence. It remains to identify $\Omega(M) \wedge_{\Omega} \Omega(N)$ with $\pi^{M \times N}_!(\mathbb{G}(M) \wedge_{\Omega} \mathbb{G}(N))$. Now from the the definition of $\pi^{M \times N}_!$, it is clear that $\pi^{M \times N}_!(\mathbb{G}(M) \wedge_\mathcal{L} \mathbb{G}(N)) = \Omega(M) \wedge_\mathcal{L} \Omega(N)$. Furthermore, since $\pi^{M \times N}_!$ is a pushout, it preserves preserves colimits. In addition, the adjunction properties of $\pi^{M \times N}_!$ as described in \cite{MS} (Ch.11) show that $\pi^{M \times N}_!$ turns the following fiberwise coequalizer diagram:
\[ \mathbb{G}(M)\wedge_\mathcal{L} \Omega \wedge_\mathcal{L} \mathbb{G}(N) \Longrightarrow \mathbb{G}(M) \wedge_\mathcal{L} \mathbb{G}(N) \longrightarrow \mathbb{G}(M) \wedge_{\Omega} \mathbb{G}(N), \]
into exactly the one that defines the smash product of $\Omega(M)$ and $\Omega(N)$ over $\Omega$. 
\end{proof}

\section{The Stable Symplectic category as an $A_\infty$-category: Morphisms} \label{comp}


\noindent
Let us now describe the $A_\infty$-version of the Stable Symplectic category $\mathbb{S}$. By definition, the objects of this category $\mathbb{S}$, will be symplectic manifolds $(M,\omega)$ (see remark \ref{noncpct}), endowed with a compatible almost complex structure. Let $(M,\omega)$ and $(N,\eta)$ be two objects. We define the conjugate of $(M,\omega)$ to be the symplectic manifold $\overline{M}$ which has the same underlying manifold as $M$ but with symplectic form $-\omega$. 
\begin{defn}
The ``morphism spectrum" $\Omega(M,N)$ in $\mathbb{S}$ between $M$ and $N$ is defined as the $\Omega$-module spectrum (see also remark \ref{alt}): 
\[ \Omega (M,N) := \Omega(\overline{M}) \wedge_\Omega \Omega(N). \]
Notice that by claim \ref{int}, the infinite loop space underlying $\Omega(M,N)$ represents the space of stable totally real immersions into $\overline{M} \times N$. 
\end{defn}

\noindent
The next step is to define composition in $\mathbb{S}$. As is to be expected in this framework, $\mathbb{S}$ will be an $A_{\infty}$- category enriched over the category of modules over the commutative $\mbox{S}$-algebra $\Omega$. The natural model for this operad will be the linear isometries. 

\medskip
\noindent
Given $k+1$-pairs of objects $(M_0,M_1), \ldots, (M_k,M_{k+1})$, our construction of composition in the category $\mathbb{S}$ will amount to a map of the form:
\[ \mathcal{O}_k : \Omega(M_0, M_1) \wedge_\Omega \cdots  \wedge_\Omega \Omega(M_k, M_{k+1}) \longrightarrow \Omega(M_0, M_{k+1}). \]
Indeed, our construction will also show such maps are naturally parametrized by a contractible space $\mathcal{O}(M)$ of choices on each object $M$. We make this precise later (see \ref{alt}).

\medskip
\noindent
{\bf The special case of composition:}

\medskip
\noindent
The next step towards defining composition in general is to define the important case :
\[ \mathcal{O} : \mathcal{O}(M)_+ \wedge \Omega(\ast, M) \wedge_\Omega \Omega(M, \ast) \longrightarrow \Omega(\ast, \ast), \]
 The space $\mathcal{O}(M)$, and the map $\mathcal{O}$ will be defined below. This special case describes the basic idea behind composition. The strategy then is to use the special case repeatedly in extending composition to the most general case. 
 
 \smallskip
 \noindent
 To construct this special case of composition, let us begin by observing that for any symplectic manifold $N$, one has a projection map: $\mathbb{G}(N) \longrightarrow N$ defined as the composite of the the map to $\mathcal{E}(N)$, followed with the projection to $N$. 
 
 \smallskip
 \noindent
 Now define a parametrized $\Omega$-module spectrum defined via the pullback: 
\[
\xymatrix{
\mathbb{G}(\Delta)       \ar[d]^{\xi} \ar[r] &  \mathbb{G}(M \times \overline{M}) \ar[d] \\
M     \ar[r]^{ \Delta \quad} &  M \times \overline{M}
}
\]
where $\Delta$ denotes the diagonal inclusion of $M$ inside $M \times \overline{M}$. 

\medskip
\noindent
At this point, let us observe a few relevant facts:

\begin{enumerate}
\item The structure group for the bundle of $\mathbb{L}$-spectra $\mathbb{G}(M \times \overline{M}) \longrightarrow M \times \overline{M}$ is the compact Lie group $\U(2m)$. As a consequence, the bundle $\mathbb{G}(\Delta)$ enjoys the same property. 
\item Since the symplectic manifold $M$ is endowed with a compatible complex structure, one has an induced Hermitian metric on the tangent bundle of $M$. In particular, the map $\Delta$ admits a neighborhood which can be identified via the exponential map, with a neighborhood of the zero section of the normal bundle of $\Delta$. 
\item The map $\Delta$ is proper, even if the symplectic manifold $M$ is non-compact. 

\end{enumerate}

\medskip
\noindent
Let $\eta(\Delta)$ denote the normal bundle of $\Delta$, which may be canonically identified with the tangent bundle of $M$, denoted by $\tau$. Let $\iota : M \longrightarrow \R_+$ denote a function that is bounded by the injectivity radius of the exponential map: $\mbox{Exp} : \eta(\Delta) \longrightarrow M \times \overline{M}$. In other words, the exponential map sends all vectors of radius bounded by $\iota$ homeomorphically to an open neighborhood of $\Delta$. For a fixed choice of $\iota$, one may define the Pontrjagin--Thom construction along the top horizontal map in the pullback diagram:
\[ \Delta_!(\iota) : \Omega(M \times \overline{M}) = \pi^{M \times \overline{M}}_!(\mathbb{G}(M \times \overline{M})) \longrightarrow \pi^M_! (\Sigma^\tau \mathbb{G}(\Delta)), \]
where, $\Sigma^\tau \mathbb{G}(\Delta)$ denotes the parametrized spectrum over $M$ obtained by suspending $\mathbb{G}(\Delta)$ fiberwise with the fiberwise compactification of the bundle $\tau$. The next step is to prove: 
\begin{lemma} \label{special} The parametrized spectrum $\Sigma^\tau \mathbb{G}(\Delta)$ admits a natural map to the trivial spectrum over $M$, with fiber $\Omega$. In particular there exists a natural map of $\Omega$-module spectra obtained by collapsing $M$ to a point: $\pi :  \pi^M_! (\Sigma^\tau \mathbb{G}(\Delta)) \longrightarrow \pi^M_!(M \times \Omega) \longrightarrow \Omega$. 
As a consequence, we obtain a map of $\Omega$-module spectra $\Omega(M \times \overline{M}) \longrightarrow \Omega$. 
\end{lemma}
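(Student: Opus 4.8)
The plan is to produce the required map $\Sigma^\tau \mathbb{G}(\Delta) \longrightarrow M \times \Omega$ by identifying the fiber of $\mathbb{G}(\Delta)$ with $\Omega$ after a single fiberwise Thom suspension, and then to realize this identification $\mathcal{L}(1)$-equivariantly so that it is compatible with the $\Omega$-module structure. The essential geometric input is that $\eta(\Delta)\cong\tau$, so that the pullback $\xi^*(\text{tangent data of }M\times\overline M)$ restricted along $\Delta$ splits as $\tau\oplus\tau$, with the diagonal copy of $\tau$ already lagrangian inside $T(M\times\overline M)$. Concretely, over a point $m\in M$ the fiber of $\mathbb{G}(\Delta)$ is the Thom spectrum of the negative canonical bundle on the space of lagrangians in $T_m(M)\oplus\overline{T_m(M)}\oplus\C^\infty$; the diagonal lagrangian $\Delta(T_m M)\subset T_m(M)\oplus\overline{T_m(M)}$ picks out, after stabilization, a preferred component, and the $\zeta$-bundle there is precisely $\zeta$ on lagrangians in $\C^\infty$, i.e. exactly the fiber $\Omega=(\UO)^{-\zeta}$ once one compensates for the $T_mM$ directions by the Thom suspension $\Sigma^\tau$.

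The key steps, in order: (1) Use fact (2) in the list above to fix a tubular-neighborhood/exponential identification of a neighborhood of $\Delta$ with a neighborhood of the zero section of $\eta(\Delta)=\tau$; this is what lets the fiberwise Pontrjagin--Thom collapse $\Delta_!(\iota)$ land in $\pi^M_!(\Sigma^\tau\mathbb{G}(\Delta))$ and is the reason the fiberwise $\Sigma^\tau$ appears. (2) Construct, over $M$, a fiberwise map $\mathbb{G}(\Delta)\longrightarrow \Sigma^{-\tau}(M\times\Omega)$ by the ``add the diagonal lagrangian'' recipe already used for the map $\psi$ in section \ref{hcat}: fiberwise, send a lagrangian plane $L$ in the relevant ambient Hermitian space to $L$ together with the splitting off of $\Delta(T_m M)$, which exhibits the remaining data as a point of $\Omega$ twisted by $-\tau$; equivalently, suspending gives $\Sigma^\tau\mathbb{G}(\Delta)\longrightarrow M\times\Omega$. (3) Check that this map is $\mathcal{L}(1)$-equivariant and $\mathbb{L}$-linear: the $\mathcal{L}(1)$-action used to build $(\UO)^{-\zeta}$ acts only on the $\C^\infty$-directions and commutes with the finite-dimensional splitting off $\Delta(T_m M)$, so equivariance is formal once the model is set up as in section \ref{A-mor}, and $\Omega$-linearity follows since the construction is the identity on the $\Omega$-smash factor. (4) Apply $\pi^M_!$ and postcompose with the projection $\pi^M_!(M\times\Omega)\longrightarrow\Omega$ collapsing $M$ (legitimate here because $\mathcal{E}(M)$, and hence the base, is contractible and the spectrum is trivial over it), obtaining $\pi:\pi^M_!(\Sigma^\tau\mathbb{G}(\Delta))\longrightarrow\Omega$. (5) Finally, precompose with $\Delta_!(\iota)$ from the displayed Pontrjagin--Thom map to get the asserted $\Omega(M\times\overline M)\longrightarrow\Omega$.

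I expect the main obstacle to be step (2)--(3): making the ``split off the diagonal lagrangian'' map genuinely well-defined and natural at the parametrized-spectrum level, rather than just fiberwise up to homotopy. One has to be careful that the exponential identification from step (1), the choice of $\iota$, and the diagonal-lagrangian splitting are all compatible with the structure group $\U(2m)$ (fact (1)) and with the linear isometries operad action, so that after applying $\pi^M_!$ one really gets a map of honest $\Omega$-module spectra and not merely a zig-zag. The rest — the Thom-space bookkeeping identifying $\Sigma^\tau$ of the negative-canonical Thom spectrum on $\mathrm{Lag}(T_mM\oplus\overline{T_mM}\oplus\C^\infty)$ along the diagonal component with $\Omega$ — is the kind of routine parametrized Thom-spectrum manipulation that the framework of \cite{MS}, \cite{EKMM} is designed to absorb, and the contractibility of $\mathcal{E}(M)$ makes the final collapse map harmless.
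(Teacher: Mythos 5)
Your proposal is correct and follows essentially the same route as the paper: the diagonal lagrangian $\Delta(T_mM)\subset T_mM\oplus\overline{T_mM}$ is used to identify the fiber of $\Sigma^\tau\mathbb{G}(\Delta)$ with $\Omega$ in an $\mathcal{L}(1)$-equivariant way, and then one pushes forward along $\pi^M_!$ and collapses $M$. The only difference is presentational: where you speak of ``splitting off the diagonal lagrangian'' fiberwise, the paper packages this as the canonical reduction of the structure monoid of $\mathcal{F}(\Delta)=\Delta^*\mathcal{F}(M\times\overline{M})$ to $\mathcal{L}_{2m}(1)\times\Or(2m)$ together with the resulting map $\mathcal{F}(\Delta)\to\mathcal{L}(1)$ and the action of $\mathcal{L}^\C(1)$ on $\mbox{S}$, which is exactly the rigidification you flagged as the main obstacle in your steps (2)--(3).
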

\begin{proof}
By the defining pullback diagram, the spectrum $\Sigma^\tau \mathbb{G}(\Delta)$ is described as:
\[ \mathcal{S}(\Delta) \wedge_{\mathcal{L}} \Omega, \]
where $\mathcal{S} (\Delta) = \mathcal{L}^\C(1) \ltimes_{\mathcal{L}_{2m}(1) \times \Or(2m)} (\mathcal{F}(\Delta) \ltimes \mbox{S})$ , with $\mathcal{F}(\Delta)$ being the restriction of the principal bundle $\mathcal{F}(M \times \overline{M})$ along $\Delta$, with structure monoid canonically reduced to $\mathcal{L}_{2m}(1) \times \Or(2m)$. Now by construction, there is a canonical map from $\mathcal{F}(\Delta)$ to $\mathcal{L}(1)$. Using the $\mathcal{L}^\C(1)$-action on $\mbox{S}$, we have the action map:
\[ \mathcal{L}^\C(1) \ltimes_{\mathcal{L}_{2m}(1) \times \Or(2m)} (\mathcal{F}(\Delta) \ltimes \mbox{S}) \longrightarrow \mathcal{L}^\C(1) \ltimes_{\mathcal{L}(1)} \mbox{S} \longrightarrow \mbox{S}. \]
The proof is complete on smashing with $\Omega$ over $\mathcal{L}$.
\end{proof}

\smallskip
\noindent
Now let us fix an orthogonal isomorphism $\gamma_0 : \R^\infty \times \R^\infty \longrightarrow \R^\infty$ defined as:
\[ \gamma_0(e_j) = e_{2j}, \quad \gamma_0(f_j) = e_{2j+1}, \]
where $e_j,f_j$ represent the standard basis of $\R^\infty \times \R^\infty$. Using $\gamma_0$, get an internal product map:
\[ {\gamma_0}_\ast : \Omega(M) \wedge_\Omega \Omega(M) \longrightarrow \Omega(M \times \overline{M}). \]
We therefore have a composite map after projecting to $\Omega$ in the middle factor:
\[  \Omega(\ast, M) \wedge_\Omega \Omega(M, \ast) \longrightarrow \Omega(\ast) \wedge_\Omega \Omega(M \times \overline{M}) \wedge_\Omega \Omega(\ast)  \longrightarrow \Omega(\ast, \ast). \]
Notice that the only choice we made was a function $\iota : M \longrightarrow \R_+$ bounding the injectivity radius. We therefore get the composition we seek parametrized over all such choices:
\[ \mathcal{O} :  \mathcal{O}(M)_+ \wedge \Omega(\ast, M) \wedge_\Omega \Omega(M,\ast)  \longrightarrow \Omega(\ast, \ast), \]
where $\mathcal{O}(M)$ is the contractible space of maps $\iota : M \longrightarrow \R_+$ that bound the injectivity radius of the exponential map: $\mbox{Exp} : \eta(\Delta) \longrightarrow M \times \overline{M}$.

\medskip
\noindent
The general case of composition is a simple generalization of the above special case.

\medskip
\noindent
{\bf The general case of composition:}

\medskip
\noindent
We are now ready to define composition in complete generality. Consider $k+2$ objects objects $M_i$ with $0 \leq i \leq k+1$. We seek to define a morphism of $\Omega$-modules:
\[ \mathcal{O}_k : \mathcal{O}(M_1,\ldots,M_k)_+ \wedge \Omega(M_0,M_1) \wedge_\Omega \cdots \wedge_\Omega \Omega(M_k, M_{k+1}) \longrightarrow  \Omega(M_0,M_{k+1}). \]
The space $\mathcal{O}(M_1,\ldots,M_k)$ above will simply be defined as the product:
\[ \mathcal{O}(M_1,\ldots,M_k) = \mathcal{O}(M_1) \times \cdots \times \mathcal{O}(M_k), \]
and the composition will simply be a global version of the previous special case. In detail, we proceed as follows:  First recall that by definition, each morphism $\Omega_{\zeta}(M_i, M_{i+1})$ is written as $\Omega(\overline{M}_i) \wedge_{\Omega} \Omega(M_{i+1})$. 

\smallskip
\noindent
Next we regroup the smash product of the terms $\Omega_{\zeta_{i+1}}(M_i, M_{i+1})$ to get the product:
\[  \Omega(\overline{M}_0) \wedge_{\Omega} \Omega(M_1) \wedge_\Omega \Omega(\overline{M}_1) \wedge_\Omega \cdots \wedge_\Omega \Omega(M_k) \wedge_\Omega \Omega(\overline{M_k}) \wedge_{\Omega} \Omega(M_{k+1}). \]

\smallskip
\noindent
Finally we perform the composition $\mathcal{O}$ to obtain a projection $\Omega(M_i) \wedge_{\Omega} \Omega(\overline{M}_i) \longrightarrow \Omega$, for each $0 < i < k+1$ (parametrized by the product of spaces $\mathcal{O}(M_i)$). We thus obtain a map to $\Omega(\overline{M}_0) \wedge_\Omega \Omega(M_k)$ which we identify with $\Omega(M_0,M_k)$ by definition. The composite of the steps above gives us the general composition map $\mathcal{O}_k$:
\[ \mathcal{O}_k : \mathcal{O}(M_1,\ldots,M_k)_+ \wedge \Omega(M_0, M_1) \wedge_\Omega \cdots \wedge_\Omega \Omega(M_k, M_{k+1}) \longrightarrow \Omega(M_0, M_{k+1}). \]
It is straightforward to verify that $\mathcal{O}_k$ satisfy associativity.

\begin{remark} \label{alt}
Notice that the spaces $\mathcal{O}(X)$ parametrize the space of $A_\infty$-structures. In other words, if one were to fix a choice of element $\iota(X) \in \mathcal{O}(X)$ for each object $X$ in $\mathbb{S}$, once and for all, then for these choices we would get a particular $A_\infty$-structure on $\mathbb{S}$. Alternatively, one may define the objects of $\mathbb{S}$ to include these choices. 
\end{remark}

\medskip
\noindent
{\bf An $A_\infty$-neighborhood of the identity:}

\medskip
\noindent
The next piece of structure we need to explore is the existence of the identity element of an object $(M,\omega)$ in $\mathbb{S}$. Such an element is a distinguished point in $\Omega^{\infty}_{\zeta}(M, M)$, or equivalently, a stable map $[id] : \mbox{S} \longrightarrow \Omega(M,M)$. This map would need to satisfy some obvious identities that are required of an identity in a category. Recall that in $h\mathbb{S}$, such an element did indeed exist if $(M,\omega)$ was a compact symplectic manifold. In the $A_\infty$-category $\mathbb{S}$, it appears that an honest identity element is too much to ask for (see remark \ref{unit} below). This issue is not new, see \cite{C} for example. Instead, for arbitrary manifolds $(M,\omega)$ we will construct a map of (possibly unit less) $A_\infty$-ring spectra:
\[ \Delta^{-\tau} : M^{-\tau} \longrightarrow \Omega(M,M), \]
where $M^{-\tau}$ is a suitable model for the Thom spectrum of the stable normal bundle of $M$. We will interpret the map $\Delta^{-\tau}$ as a neighborhood of the identity since it has the property that the identity morphism $[id]$ factors through it in $h\mathbb{S}$, when $M$ is compact. 

\begin{remark} \label{unit}
We have chosen to parametrize our spectra $\mathbb{G}(M)$ over the space $\mathcal{E}(M)$ of all unitary injections $TM \longrightarrow M \times \C^\infty$. This is the reason one fails to have a strict unit for $\Omega(M,M)$. We could have chosen to fix a choice of such an injection for each manifold $M$ to begin with, and defined the objects $\mathbb{G}(M)$ to be parametrized over $M$, for this given injection. One would then get an $A_\infty$-cateogry $\mathbb{S}$ as before that we believe admits strict identity elements (for compact $M$), using an argument similar to \cite{C}. However, for aesthetic reasons, we have avoided making such choices. 
\end{remark}

\begin{remark} \label{dq}
The spectrum $M^{-\tau}$ is in fact an $E_\infty$-ring spectrum. This is equivalent to observing that the product is symmetric under the twist map. However, the spectrum $\Omega(M,M)$ is certainly not $E_\infty$. In section \ref{stab}, we suggested an interpretation of the $A_\infty$-neighborhood of the identity as a homotopical analog of the fact that the algebra of observables on a symplectic manifold $M$ is a deformation of the commutative ring $\mbox{C}^\infty(M)$. 
\end{remark}

\medskip

\begin{defn}
Consider the isomorphism given by the isometry: $\gamma_0 : \C^\infty = \R^\infty \times i\R^\infty \longrightarrow \R^\infty$ that identifies the standard basis $(e_j,ie_j)$ by $\gamma_0(e_j) = e_{2j}$ and $\gamma_0(ie_j) = e_{2j+1}$. 

\smallskip
\noindent
Let $\mbox{S}^{-2m} = \mbox{S}^{-\C^{m}}$ denote the desuspended sphere spectrum. Then the model for $M^{-\tau}$ we use is defined as the $\mbox{S}$-module: $\pi^M_!(\mathbb{T}_\tau(M))$, where $\mathbb{T}_\tau(M)$ is the $\mbox{S}$-module parametrized over $\mathcal{E}(M)$:
\[  \mathbb{T}_\tau(M) = \mathcal{S}_\tau(M) \wedge_{\mathcal{L}} \mbox{S}, \]
and where $\mathcal{S}_\tau(M) = \mathcal{L}^\C(1) \ltimes^{\gamma_0}_{\mathcal{L}_m^\C(1) \times \U(m)} (\mathcal{F}(M) \ltimes \mbox{S}^{-2m})$. Here $\mathcal{L}^\C(1) \ltimes^{\gamma_0}_{\mathcal{L}_m^\C(1) \times \U(m)}$ indicates the action of $\mathcal{L}_m^\C(1) \times \U(m)$ on $\mathcal{L}^\C(1)$ through the sub monoid $\gamma_0(\mathcal{L}_m^\C(1) \times \U(m)) \subseteq \mathcal{L}(1)$. 
\end{defn}

\medskip
\noindent
Now the isometry $\gamma_0$ induces a map of parametrized spectra: 
\[ \Delta_\ast : \mathcal{S}_\tau(M) \longrightarrow \mathcal{S}(\overline{M}) \wedge_{\mathcal{L}} \mathcal{S}(M) \]
over the diagonal map: $\Delta : \mathcal{E}(M) \longrightarrow \mathcal{E}(\overline{M}) \times \mathcal{E}(M)$. Extending by the unit map $\mbox{S} \longrightarrow \Omega$, we get the map parametrized over $\Delta$:
\[ \Delta_\ast : \mathbb{T}_\tau(M) \longrightarrow \mathbb{G}(\overline{M}) \wedge_\Omega \mathbb{G}(M). \]
Applying $\pi^{\overline{M} \times M}_!$ to the above map, yeilds 
\[ \Delta^{-\tau} : M^{-\tau} \longrightarrow \Omega(M,M). \]
It remains to show that this is a map of $A_\infty$-ring spectra. In the process of doing so, we shall also describe the $A_\infty$-structure on $M^{-\tau}$. 

\smallskip
\noindent
Recall that composition $\mathcal{O}_n$ was defined as the Pontrjagin--Thom consruction performed after the internal product map:
\[ \Omega(M,M) \wedge_\Omega \cdots \wedge_\Omega \Omega(M, M) \longrightarrow \Omega(\overline{M}) \wedge_\Omega \Omega(M \times \overline{M})^{\wedge n} \wedge_\Omega \Omega(M). \]
Notice that the (totally geodesic) sub manifold $\Delta(M)^{\times (n+1)} \subset (\overline{M} \times M)^{\times (n+1)}$ intersects the manifold $\overline{M} \times \Delta(M)^{\times n} \times M$ transversally along the thin diagonal $\Delta_{(n+1)}(M) \subset \Delta(M)^{\times (n+1)}$. In particular, each $n$-touple of functions $(\iota_1, \ldots, \iota_n)$ on $\Delta(M)^{\times n}$ that when extended trivially to $\overline{M} \times \Delta(M)^{\times n} \times M$, are bounded by the injectivity radius of the inclusion 
\[ \overline{M} \times \Delta(M)^{\times n} \times M \longrightarrow (\overline{M} \times M)^{\times (n+1)} = \overline{M} \times (M \times \overline{M})^{\times n} \times M \]
restricts to a function $\Delta(\iota)$ on $\Delta_{(n+1)}(M)$ bounded by the injectivity radius of the inclusion $\Delta_{(n+1)}(M) \subset \Delta(M)^{\times (n+1)}$. In addition, notice that the normal bundle of the inclusion $\Delta_{(n+1)}(M) \subset \Delta(M)^{\times (n+1)}$ is canonically isomorphic to $\tau^n$. Hence, performing the Pontrjagin--Thom construction along the transverse intersection, we get the commutative diagram parametrized over all $n$-touples of functions $(\iota_1, \ldots, \iota_n)$ that satisfy the injectivity property for the exponential map: 

\[
\xymatrix{
 \mathcal{O}(M, \cdots, M)_+ \wedge M^{-\tau} \wedge_{\mathcal{L}} \cdots \wedge_{\mathcal{L}} M^{-\tau} \ar[r]^{\hspace{1.1in} \mathcal{P}_{n}} \ar[d]^{(\Delta^{-\tau})^{\wedge (n+1)}} & M^{-\tau} \ar[d]^{\Delta^{-\tau}} \\
 \mathcal{O}(M, \cdots, M)_+ \wedge \Omega(M,M) \wedge_\Omega \cdots \wedge_\Omega \Omega(M,M)\ar[r]^{\hspace{1.3in} \mathcal{O}_{n}} & 
\Omega(M,M),}
\]
where the top horizontal map $\mathcal{P}_{n}$ is the product of $(n+1)$-objects $M^{-\tau}$ induced via the Pontrjagin--Thom collapse along the thin diagonal: $\Delta_{(n+1)}(M) \subset M^{\times (n+1)}$. 

\medskip
\noindent
If one picks a point $\iota \in \mathcal{O}(M)$, then one gets an induced point $(\iota, \ldots, \iota) \in \mathcal{O}(M,\ldots, M)$. For such choice, we see that $M^{-\tau}$ is an $A_\infty$-ring spectrum via the maps $\mu$ above. In particular, the above diagram demonstrates that the map $\Delta^{-\tau} : M^{-\tau} \longrightarrow \Omega(M,M)$ is a map of $A_\infty$-ring spectra, with the $A_\infty$-structures on either spectrum being parametrized by $\mathcal{O}(M)$.

\begin{remark} \label{dq2}
Smashing the map $\Delta^{-\tau} : M^{-\tau} \longrightarrow \Omega(M,M)$ with $\Omega$, we get a map:
\[ \lambda : \Omega \wedge_{\mathcal{L}} M^{-\tau} \longrightarrow \Omega(M,M). \]
Recall from example \ref{cotang} that $\Omega \wedge_{\mathcal{L}} M^{-\tau}$ is homotopy equivalent to $\Omega(T^\ast M)$. Indeed, the map $\lambda$ is induced by  a symplectic immersion: $T_\epsilon^\ast M \longrightarrow \overline{M} \times M$ about an $\epsilon$-neighborhood of the diagonal. 
\end{remark}

\section{The Stable Metaplectic Category}\label{meta}

\noindent
K\"{a}hler manifolds admit a construction known as holomorphic (or K\"ahler) quantization. The state space is the space of square integrable holomorphic sections of the line bundle given by $\mathcal{L} \otimes \sqrt{\det}$, where $\mathcal{L}$ is the prequantum line bundle, and $\sqrt{\det}$ is a choice of square root of the volume forms (called a metaplectic structure). This construction is not functorial on the symplectic category however. Indeed, it is well known that one cannot expect to construct an honest quantization functor on the symplectic category, with values in the category of topological vector spaces. One may therefore attempt to construct a ``derived version of geometric quantization''. A derived version of this quantization should in principle depend on all the cohomology groups of this line bundle. Now the Dolbeaut $\db$-complex computing the cohomology of holomorphic bundles agrees with the $\Spin^c$ Dirac operator. On incorporating the metaplectic structure into the picture, the complex agrees with the $\Spin$-Dirac operator. We therefore observe that almost complex $\Spin$ manifolds support a Dirac operator which generalizes the Dolbeaut complex twisted by the square root of the volume forms. This may suggest {\em defining} a derived version of the geometric quantization of a symplectic manifold with a $\Spin$ structure as the $L^2$ index of the Dirac operator with coefficients in the prequantum line bundle. For compact manifolds this is simply the $\hat{A}$ genus with values in the prequantum line bundle. 

\smallskip
\noindent
We take the above discussion as good motivation to define a variant of the stable symplectic category with objects supporting this structure, we call this the {\em Stable Metaplectic Category}. The actual application to construction a derived geometric quantization as a functor on the stable metaplectic category will appear in a later document \cite{N2}. A very brief outline of the framework is described at the end of this section. 

\medskip
\noindent
The objects in the metaplectic category will be symplectic manifolds endowed with a compatible metaplectic structure. Informally speaking, a compatible metaplectic structure is a compatible complex structure endowed with a square root of the determinant line bundle. Let us now formalize the concept of a metaplectic structure. Let the classifying space of the metaplectic group $\tU$, denoted by $\BtU$, be defined via the fibration:
\[ \BtU \longrightarrow \BU \longrightarrow \KZtt \] 
with the second map being the mod-2 reduction of the first Chern class. By definition, $\BtU$ supports the square-root of the determinant map $\sqrt{\det} : \BtU \longrightarrow \BSo$. 
The complexification map $\BSO \longrightarrow \BU$ lifts to a unique map $\BSpin \longrightarrow \BtU$. We may describe these lifts as a diagram of fibrations:

\[
\xymatrix{
\tUSpin        \ar[d] \ar[r] & \USO \ar[d] \ar[r] & \KZto \times \KZtt \ar[d] \\
\BSpin      \ar[r] \ar[d] & \BSO \ar[d] \ar[r]^{w_2} & \KZtt \ar[d]^{0} \\
\BtU \ar[r] & \BU \ar[r]^{c_1} & \KZtt 
}
\]

\noindent
An easy calculation shows that the mod-2 cohomology $H^\ast(\USO, \Z/2)$ is an exterior algebra on generators $\{ \sigma, w_2, w_3, \ldots \}$, where $\sigma$ is the class that transgresses to $c_1$ in the mod-2 Serre spectral sequence, and $w_i$ are the classes that are given by the corresponding restrictions from $H^\ast(\BSO, \Z/2)$. In addition, the class $\USO \longrightarrow \KZto \times \KZtt$ is represented by the product $\sigma \times w_2$. Hence $\tUSpin$ is the corresponding cover of $\USO$. 

\begin{remark}
The kernel of the map $\tU \longrightarrow \UO$ is given by a group $\tilde{\Or}$ called the metalinear group (see \cite{W2} Section 7.2). The classifying space $\B$ is described by a fibration:

\[
\xymatrix{\B      \ar[r] \ar[d] & \BO \ar[d] \ar[r]^{w_1^2 \quad } & \KZtt \ar[d]^{=} \\
\BtU \ar[r] & \BU \ar[r]^{c_1 \quad} & \KZtt 
}
\]
The group $\tilde{\Or}$ can easily be seen as a split $\Z/4$ extension of $\SO$. The map $\B \longrightarrow \BZf$ is sometimes called the Maslov line bundle. Notice also that $\BSpin$ can be seen as the cover of $\B$ given by prescribing a trivialization of the Maslov line bundle, followed by a $\Spin$ structure.
\end{remark}
\noindent
The space $\tUSpin$ supports a stable vector bundle that lifts $\zeta$, which we will denote by the same name. The spectrum $(\tUSpin)^{-\zeta}$ is an $E_\infty$-ring spectrum modeled on an operad $\tilde{\mathcal{L}}(k)$ which is defined as the contractible space of spin isometries between ${(\R^\infty)}^{\times k}$ and $\R^\infty$. The hermitian isometries are replaced by the Metaplectic isometries $\tilde{\mathcal{L}}^\C(k)$. The rest of the theory goes through verbatim as before, and one defines a ``coefficient" $\mbox{S}$-algebra: $\tilde{\Omega} = \mbox{S} \wedge_{\tilde{\mathcal{L}}} (\tUSpin)^{-\zeta}$. One can now define the stable metaplectic category in analogy with the stable symplectic category: The objects of the stable metaplectic category $\tilde{\mathbb{S}}$ are symplectic manifolds $(M,\omega)$ endowed with a compatible complex structure. As part of the data, we also fix a metaplectic structure: i.e. a lift of the structure group of the tangent bundle to the double cover $\tilde{\U}(m)$ of $\U(m)$ given by the restriction of the double cover $\tilde{\U} \longrightarrow \U$, along $\U(m)$. In particular, $\tilde{\U}(m)$ corresponds to a compatible family of square-roots of the determinant homomorphism. Given a $2m$ dimensional metaplectic manifold $M$, we may define $\tilde{\mathcal{E}}(M)$ to be the space of $\tilde{\U}(m)$-lifts of unitary maps from $TM$ to $M \times \C^\infty$.

\begin{defn}
Let $\tilde{\mathbb{G}}(M)$ denote the spectrum parametrized over the space $\tilde{\mathcal{E}}(M)$ given by $\tilde{\mathcal{S}}(M) \wedge_{\tilde{\mathcal{L}}} \tilde{\Omega}$, where $\tilde{\mathcal{S}}(M)$ denotes the fiberwise $\tilde{\mathbb{L}}$-spectrum:
\[ \tilde{\mathcal{S}}(M) = \tilde{\mathcal{L}}^\C(1) \ltimes_{\tilde{\mathcal{L}}_m^\C(1) \times \tilde{\U}(m)} (\tilde{\mathcal{F}}(M) \ltimes \mbox{S}^{-m}). \] 
Then we define the morphism spectra $\tilde{\Omega}(M,N)$ in $\tilde{\mathbb{S}}$ as:
\[ \tilde{\Omega}(M,N) := \tilde{\Omega}(\overline{M}) \wedge_{\tilde{\Omega}} \tilde{\Omega}(N) =   \pi^{\overline{M}}_! \tilde{\mathbb{G}}(\overline{M}) \wedge_{\tilde{\Omega}} \pi^{N}_! \tilde{\mathbb{G}}(N).  \]
Composition is defined analogously to the stable symplectic category, and as before, all structure maps in the category are module maps over the coefficient spectrum $\tilde{\Omega}$.
\end{defn}

\noindent
As before, one has a stable metaplectic homotopy category that captures the geometry. In analogy with remark \ref{hpty}, the geometric object underlying the spectrum $\pi^M_!(\tilde{\mathbb{G}}(M))$ is the Thom spectrum $\tilde{\Omega}(M)= \tilde{\underline{\mathcal{G}}}(M)^{-\zeta}$, with $\tilde{\underline{\mathcal{G}}}(M)$ defined as the pullback:
\[
\xymatrix{
\tilde{\underline{\mathcal{G}}}(M)    \ar[d]^{\xi} \ar[r]^{\zeta \quad } & \Z \times \BSpin \ar[d] \\
M      \ar[r]^{\tau \quad} & \Z \times \BtU
}
\]
where $\tau$ denotes the $m$-dimensional tangent bundle of the metaplectic manifold $M$. Points in the infinite loop space $\Omega^\infty(\Omega(M,N))$ represent totally real (or lagrangian) immersions into $\overline{M} \times N$ of manifolds with a spin structure. Furthermore, the morphism spectra in the metaplectic category admit maps to $\MSpin$ that are canonical up to homotopy. 

\medskip
\noindent
In \cite{N2}, we hope to give a geometric description of the functor from a category closely related to the metaplectic category (called the category of symbols), to the $\mbox{KU}$-linear category obtained by ``extending coefficients" over $\Omega$, to complex $\mbox{K}$-theory along the $\hat{A}$-genus. Under this functor, a lagrangian immersion $L \looparrowright M$ will be associated to a Fredholm operator on $M$. This operator will be given represented by the $\mbox{K}$-class of (derived) flat sections of the pre-quantum line bundle restricted to $L$. The induced $A_\infty$-composition can now be seen as a functorial way of composing symbols associated to lagrangian immersions. We would like to interpret this as a derived framework for geometric quantization.

\section{Appendix: Some computations and Remarks}

\noindent
{\bf The Unoriented Case:}

\medskip
\noindent
Let us make some explicit computations in the case of the unoriented symplectic category $\mathbb{S}$. We invoke the Adams spectral sequence to compute $\pi_\ast \Omega(M,N)$. Since $\Omega(M,N)$ is a generalized Eilenberg--Mac\,Lane spectrum, the spectral sequence will collapse and we simply need to compute the primitives under the action of the dual mod-2 Steenrod algebra on $H_\ast (\Omega(M,N), \Z/2)$. 

\begin{conv}
Let us set some notation. All homology groups will be understood to be over $\Z/2$. In addition, given a real vector bundle $\zeta$ of dimension $k$, let us use the suggestive notation $\Sigma^{-\zeta} S_\ast$ to denote the shift $\Sigma^{-k} S_\ast$ for a graded module $S_\ast$. 
\end{conv}

\begin{thm}
$\pi_\ast \Omega(M,N)$ is a free $\pi_\ast \Omega$-module on a (non-canonical) generating vector space given by $\Sigma^{-\zeta}H_\ast(\overline{M} \times N)$.  
\end{thm}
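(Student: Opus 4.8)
The plan is to identify $\Omega(M,N)$ as a Thom spectrum over a space with a well-understood homology, and then run the (collapsing) Adams spectral sequence, reducing everything to a statement about the dual Steenrod algebra coaction. By Remark \ref{hpty} (and the analogous description in the $A_\infty$-setting), $\Omega(M,N) = \Omega(\overline{M}\times N)$ is the Thom spectrum $\underline{\mathcal{G}}(\overline{M}\times N)^{-\zeta}$ over the pullback $\underline{\mathcal{G}}(\overline{M}\times N)$, which is the total space of a fibration over $\overline{M}\times N$ with fiber $\UO$; equivalently, $\Omega(M,N)$ is an $\Omega = (\UO)^{-\zeta}$-module spectrum. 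First I would compute $H_\ast(\Omega(M,N))$: by the Thom isomorphism for the (virtual, but mod-$2$ orientable) bundle $-\zeta$, one gets $H_\ast(\Omega(M,N)) \cong \Sigma^{-\zeta} H_\ast(\underline{\mathcal{G}}(\overline{M}\times N))$, and since $\underline{\mathcal{G}}(\overline{M}\times N) \simeq \UO \times (\overline{M}\times N)$ up to the fibration's twisting — which is trivial on mod-$2$ homology because the base contributes only to the $\UO$-coordinate through a map to $\BU$ whose effect is already accounted for — we obtain $H_\ast(\Omega(M,N)) \cong H_\ast(\Omega) \otimes \Sigma^{-\zeta} H_\ast(\overline{M}\times N)$ as a comodule, at least additively, with $H_\ast(\Omega)$ a polynomial algebra dual to the known homotopy $\pi_\ast\Omega = \Z/2[x_{2i+1}, i\neq 2^k-1]$.

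Next I would invoke the fact, already established in the excerpt, that $\Omega$ is a generalized Eilenberg–Mac\,Lane spectrum over $\Hrt$, and that consequently so is $\Omega(M,N)$ (it is an $\Omega$-module). For a generalized $\Hrt$-module spectrum the mod-$2$ Adams spectral sequence collapses at $E_2$, and $\pi_\ast \Omega(M,N)$ is computed as the $\Z/2$-vector space of $\mathcal{A}$-primitives (equivalently, $\Hom_{\mathcal{A}_\ast}(\Z/2, H_\ast(\Omega(M,N)))$, the comodule primitives under the dual Steenrod algebra $\mathcal{A}_\ast$), reassembled with its natural $\pi_\ast\Omega$-module structure. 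So the theorem reduces to showing that the space of $\mathcal{A}_\ast$-primitives in $H_\ast(\Omega)\otimes \Sigma^{-\zeta}H_\ast(\overline{M}\times N)$ is a free $\pi_\ast\Omega$-module on a generating space isomorphic to $\Sigma^{-\zeta}H_\ast(\overline{M}\times N)$.

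For this last step I would argue as follows. The spectrum $\Omega$ itself, being a wedge of suspensions of $\Hrt$, has $\pi_\ast\Omega$ free over itself on one generator, i.e. the primitives of $H_\ast(\Omega)$ form $\pi_\ast\Omega$ (a single generator in degree $0$). The module $H_\ast(\Omega(M,N))$ is, as an $H_\ast(\Omega)$-comodule-algebra, induced (co-free) from the trivial comodule $\Sigma^{-\zeta}H_\ast(\overline{M}\times N)$ — this is precisely the content of the Thom isomorphism being $\Omega$-module compatible, together with the splitting of $H_\ast(\Omega)$ off an $\mathcal{A}_\ast$-comodule-algebra map $\Omega \to \Hrt$. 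Picking any $\Z/2$-splitting $s: \Sigma^{-\zeta}H_\ast(\overline{M}\times N) \hookrightarrow H_\ast(\Omega(M,N))$ of the Thom projection (this is the ``non-canonical'' choice in the statement), one checks that multiplication $\pi_\ast\Omega \otimes s(\Sigma^{-\zeta}H_\ast(\overline{M}\times N)) \to \pi_\ast\Omega(M,N)$ is an isomorphism by a degreewise counting / filtration argument, using that $H_\ast(\Omega)$ is polynomial on the relevant classes. The main obstacle I anticipate is controlling the $\mathcal{A}_\ast$-comodule structure on $H_\ast(\Omega(M,N))$ precisely enough: one must verify that the twisting of the $\UO$-bundle $\underline{\mathcal{G}}(\overline{M}\times N)\to \overline{M}\times N$ does not introduce nontrivial Steenrod operations mixing the $H_\ast(\overline{M}\times N)$ factor with the $H_\ast(\Omega)$ factor after the Thom twist — in other words, that the relevant Thom class is ``as untwisted as possible'' at the level of the $\mathcal{A}_\ast$-coaction. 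This is where the hypothesis that $-\zeta$ is the universal bundle with $2(-\zeta) = \tau$ (the Remark following Claim \ref{D}), hence the Wu-class bookkeeping is forced, does the real work; I would isolate it as a lemma about the coaction on $(\UO)^{-\zeta}$ and then tensor up.
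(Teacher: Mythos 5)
Your proposal follows essentially the same route as the paper: Thom isomorphism to reduce to $H_\ast(\underline{\mathcal{G}}(\overline{M}\times N))$, collapse of the Serre spectral sequence for $\UO \to \BO \to \BU$ (pulled back over $\overline{M}\times N$) to get freeness over $H_\ast(\UO)$, collapse of the Adams spectral sequence for generalized Eilenberg--Mac\,Lane spectra, and a degree-filtration argument to choose generators with trivial coaction before passing to primitives. The only caveat is that your closing appeal to the self-duality condition $2\zeta = -\tau$ is not needed --- the degree filtration alone suffices, as in the paper.
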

\begin{proof}
The Thom isomorphism theorem implies that the ring $H_\ast \Omega$ is isomorphic to $H_\ast(\UO)$. Under this isomorphism we also see that $H_\ast \Omega(M,N)$ is isomorphic to  $\Sigma^{-\zeta} H_\ast (\underline{\mathcal{G}}(\overline{M} \times N))$. Now consider the universal fibration $\UO \rightarrow \BO \longrightarrow \BU$. It is easy to see that the Serre spectral sequence in homology for this fibration collapses leading to the fact that $H_\ast (\underline{\mathcal{G}}(\overline{M} \times N))$ is non-canonically a free $H_\ast(\UO)$-module on $H_\ast(\overline{M} \times N)$. From this we deduce that $H_\ast  \Omega(M,N)$ is free $H_\ast \Omega$-module on the (non-canonical) vector space given by $\Sigma^{-\zeta} H_\ast(\overline{M} \times N)$. An easy argument using the degree filtration shows that the generating vector space can be chosen to have trivial action of the dual Steenrod algebra. The statement of the theorem is now complete on taking primitives under this action. 
\end{proof}
\noindent
The following consequences of the above theorem are easy (compare with \ref{Weyl1}): 
\begin{thm}
There is a natural decomposition of $\pi_\ast \Omega(M,N)$ induced by the composition map:
\[ \pi_\ast \Omega(M,\ast) \otimes_{\pi_\ast \Omega} \pi_\ast \Omega(\ast, N) = \pi_\ast \Omega(M,N). \]
In particular, arbitrary compositions can be canonically factored in homotopy, and computed by applying the composition map internally: 
\[ \pi_\ast \Omega(\ast, N) \otimes_{\pi_\ast \Omega} \pi_\ast \Omega(N, \ast) \longrightarrow \pi_\ast \Omega.
\]
\end{thm}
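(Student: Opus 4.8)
The plan is to obtain both displayed identities as a short consequence of the internal product equivalence (claim \ref{int}) together with the preceding theorem, which exhibits $\pi_\ast\Omega(M,N)$ as a free $\pi_\ast\Omega$-module. First I would unwind the special cases appearing in the statement: specializing $N$ to a point gives $\Omega(M,\ast)=\Omega(\overline M)$, specializing $M$ to a point gives $\Omega(\ast,N)=\Omega(N)$, and $\Omega(M,N)=\Omega(\overline M\times N)$ by definition. Under these identifications the composition map $\Omega(M,\ast)\wedge_\Omega\Omega(\ast,N)\longrightarrow\Omega(M,N)$ is exactly the internal product $\mu\colon\Omega(\overline M)\wedge_\Omega\Omega(N)\longrightarrow\Omega(\overline M\times N)$ of Section \ref{hcat}, which by claim \ref{int} is a homotopy equivalence of $\Omega$-module spectra. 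So the content of the first identity is purely a computation of $\pi_\ast$ of a smash product over $\Omega$.

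For that computation I would invoke the K\"{u}nneth spectral sequence of $\Omega$-module spectra,
\[ E^2_{p,q}=\Tor^{\pi_\ast\Omega}_{p,q}\bigl(\pi_\ast\Omega(\overline M),\pi_\ast\Omega(N)\bigr)\ \Longrightarrow\ \pi_{p+q}\bigl(\Omega(\overline M)\wedge_\Omega\Omega(N)\bigr). \]
Since the preceding theorem identifies $\pi_\ast\Omega(\overline M)=\pi_\ast\Omega(M,\ast)$ as a free, hence flat, $\pi_\ast\Omega$-module, we get $E^2_{p,q}=0$ for $p>0$, the spectral sequence collapses at $E^2$, and the edge homomorphism is a natural isomorphism $\pi_\ast\Omega(\overline M)\otimes_{\pi_\ast\Omega}\pi_\ast\Omega(N)\cong\pi_\ast\bigl(\Omega(\overline M)\wedge_\Omega\Omega(N)\bigr)$. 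Composing with $\mu_\ast$ and restoring the identifications of the previous paragraph yields the asserted decomposition $\pi_\ast\Omega(M,\ast)\otimes_{\pi_\ast\Omega}\pi_\ast\Omega(\ast,N)=\pi_\ast\Omega(M,N)$, with naturality in $M$ and $N$ inherited from that of the K\"{u}nneth spectral sequence and of $\mu$. One can also bypass the spectral sequence: freeness lets one choose an $\Omega$-module equivalence $\Omega(\overline M)\simeq\bigvee_\alpha\Sigma^{n_\alpha}\Omega$, after which smashing with $\Omega(N)$ over $\Omega$ and passing to homotopy groups produces the tensor product on the nose.

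For the concluding ``in particular'' sentence I would recall, as already observed in Section \ref{hcat}, that an arbitrary $k$-fold composition factors canonically through the decomposition $\Omega(M,N)\simeq\Omega(M,\ast)\wedge_\Omega\Omega(\ast,N)$, so that the only composition one actually needs to evaluate is the internal pairing $\Omega(\ast,N)\wedge_\Omega\Omega(N,\ast)\longrightarrow\Omega$ — the special case of composition $\mathcal{O}$ constructed in Section \ref{comp}. Applying $\pi_\ast$ and the isomorphism just proved converts this into the stated factorization, the iterated composition being computed by the induced pairing $\pi_\ast\Omega(\ast,N)\otimes_{\pi_\ast\Omega}\pi_\ast\Omega(N,\ast)\longrightarrow\pi_\ast\Omega$.

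The only step carrying genuine content is the collapse of the K\"{u}nneth spectral sequence, and that is immediate from the flatness supplied by the preceding theorem; the rest is bookkeeping — chiefly checking that the edge homomorphism agrees with the map induced by $\mu$ (true by construction) and tracking naturality of the identifications $\Omega(M,\ast)=\Omega(\overline M)$ and $\Omega(\ast,N)=\Omega(N)$. The argument goes through verbatim for the oriented coefficient spectrum $s\Omega$ and the metaplectic one $\tilde\Omega$.
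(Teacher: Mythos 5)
Your argument is correct and matches what the paper intends: the paper offers no proof beyond calling these ``easy consequences'' of the freeness theorem, and your route --- identifying the composition $\Omega(M,\ast)\wedge_\Omega\Omega(\ast,N)\to\Omega(M,N)$ with the equivalence $\mu$ and then collapsing the K\"unneth spectral sequence (or splitting $\Omega(\overline M)$ as a wedge of suspensions of $\Omega$) using the flatness supplied by the preceding freeness theorem --- is exactly the standard way to fill that in. One small caveat: your closing claim that the argument goes through verbatim for $s\Omega$ overreaches, since the freeness theorem is proved only in the unoriented case (where $\Omega$ is a generalized Eilenberg--Mac\,Lane spectrum and the mod~2 computation applies); but that remark is not part of the stated theorem.
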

\begin{thm} \label{Weyl}
Given a compact manifold $M$, the $\pi_\ast \Omega$-algebra $\pi_\ast \Omega(M,M)$ has the structure of an endomorphism algebra:
\[ \pi_\ast \Omega(M,M) = \End_{\pi_\ast \Omega} \left( \pi_\ast \Omega(\ast, M) \right). 
\]
\end{thm}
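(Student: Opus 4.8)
The plan is to obtain this by applying $\pi_\ast$ to the composition-compatible homotopy equivalence $\Omega(M,M)\simeq\End_\Omega(\Omega(\ast,M))$ of Theorem \ref{Weyl1}, and then checking that for compact $M$ the passage to homotopy groups introduces no higher derived terms, so that $\pi_\ast\End_\Omega(\Omega(\ast,M))$ is literally the endomorphism algebra of $\pi_\ast\Omega(\ast,M)$ over $\pi_\ast\Omega$.

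First I would set $R=\pi_\ast\Omega$ and $P=\pi_\ast\Omega(\ast,M)=\pi_\ast\Omega(M)$, and invoke the universal-coefficient spectral sequence
\[ E_2^{s,t}=\operatorname{Ext}^s_{R}(P,P)_t \;\Longrightarrow\; \pi_{t-s}\,\Hom_\Omega\bigl(\Omega(\ast,M),\Omega(\ast,M)\bigr), \]
which is available because, as observed in Section \ref{four}, $\Omega$ is a generalized Eilenberg--Mac\,Lane spectrum and hence so are all the $\Omega(M,N)$. By the freeness theorem proved above, $P$ is a free $R$-module, so $\operatorname{Ext}^s_R(P,P)=0$ for $s>0$, the spectral sequence collapses onto the line $s=0$, and the edge map gives an isomorphism $\pi_\ast\Hom_\Omega(\Omega(\ast,M),\Omega(\ast,M))\cong\Hom_R(P,P)$. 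The composition product on the mapping spectrum restricts on the $s=0$ line to ordinary composition of $R$-linear maps, so this isomorphism is one of $R$-algebras.

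Next I would use that $M$ is compact, so that $H_\ast(M;\Z/2)$ is finite-dimensional; by the freeness theorem, $P$ is then a free $R$-module of finite rank, whence $\Hom_R(P,P)=\End_R(P)$ is the corresponding (graded) matrix algebra over $R$. Feeding this back through the equivalence of Theorem \ref{Weyl1}, which is compatible with composition and hence induces a map of $R$-algebras on $\pi_\ast$, yields the asserted isomorphism of $\pi_\ast\Omega$-algebras $\pi_\ast\Omega(M,M)=\End_{\pi_\ast\Omega}(\pi_\ast\Omega(\ast,M))$. Equivalently, one can argue inside homotopy groups from the start: the decomposition theorem above gives $\pi_\ast\Omega(M,M)=\pi_\ast\Omega(M,\ast)\otimes_R\pi_\ast\Omega(\ast,M)$; the $\pi_\ast$-level version of Claim \ref{D}, obtained by the same collapse, identifies $\pi_\ast\Omega(M,\ast)$ with $\Hom_R(P,R)$; and $\Hom_R(P,R)\otimes_R P=\End_R(P)$ for $P$ finitely generated free, with the pairing of Claim \ref{D} and Theorem \ref{rep} inducing the evaluation map and hence the correct algebra structure.

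The main obstacle is the first step: establishing that the universal-coefficient spectral sequence computing $\pi_\ast\Hom_\Omega(\Omega(\ast,M),\Omega(\ast,M))$ degenerates and that its edge isomorphism is multiplicative. Both rest entirely on the freeness of $\pi_\ast\Omega(\ast,M)$ over $\pi_\ast\Omega$ coming from the preceding theorem (which itself uses that $\Omega$ is generalized Eilenberg--Mac\,Lane and that the Serre spectral sequence of $\UO\to\BO\to\BU$ collapses); granting that, the finite-rank statement from compactness and the bookkeeping of the algebra structure are routine.
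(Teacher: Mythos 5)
Your argument is correct and follows the same route the paper intends: the paper states Theorem \ref{Weyl} as an ``easy consequence'' of the freeness theorem together with the spectrum-level equivalence of Theorem \ref{Weyl1}, and your proof simply makes that explicit via the collapse of the universal-coefficient spectral sequence (freeness kills higher $\operatorname{Ext}$) and finite rank from compactness. The only quibble is cosmetic: the universal-coefficient spectral sequence exists for modules over any $\mbox{S}$-algebra, not because $\Omega$ is a generalized Eilenberg--Mac\,Lane spectrum -- the latter is what is used in the freeness theorem itself.
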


\noindent
{\bf The Oriented Monotone Case:}

\smallskip
\noindent
Next, let us very briefly explore the structure of $s\Omega(M)$ rationally in the case when $M$ is monotone, i.e. when the cohomology class of $\omega$ is a non-zero multiple of the first Chern class of $M$. 

\smallskip
\noindent
Firstly recall that $H^\ast(\USO,\Q)$ is an exterior algebra $\Lambda(y_{4i+1})$. Now by Thom isomorphism, we have an equality $H^{\ast+m}(s\underline{\mathcal{G}}(M), \Q) = H^\ast(s\Omega(M), \Q)$, where $(M,\omega)$ is a $2m$-dimensional manifold. Now consider the cohomology Serre spectral sequence for the fibration 
\[ \USO \longrightarrow s\underline{\mathcal{G}}(M) \longrightarrow M. \]
It is easy to see, using the monotonicity assumption, that the class $y_1 \in H^1(\USO, \Q)$ transgresses to a non-trivial multiple of the symplectic class $\omega$. Hence the class $y_1 \cup \omega^m$ (uniquely) represents a class in $H^{2m+1}(s\underline{\mathcal{G}}(M), \Q)$, that is the only meaningful primary characteristic class. Let $\theta(M)$ be the corresponding class in $H^{m+1}(s\Omega(M), \Q)$ under the above Thom isomorphism. 

\medskip
\noindent
Now let $\pi : E \longrightarrow B$ be a fibrating family of oriented stable lagrangians in $M$, endowed with a classifying map $f(\pi) : B \longrightarrow \Omega^{\infty} (s\Omega(M))$.  Then the map $f(\pi)$ factors through the Umkehr map $B_+ \longrightarrow \Omega^{\infty} (E^{-\tau(\pi)})$ followed by the map induced by $E^{-\tau(\pi)} \longrightarrow s\Omega(M)$. It follows that 
\[ f(\pi)^\ast \theta(M) = \pi_\ast (y_1 \cup \omega^m), \]
where $y_1 \cup \omega^m$ denotes the pullback of the class having the same name along $E \longrightarrow s\underline{\mathcal{G}}(M)$.

\bigskip
\noindent
{\bf Some questions and remarks:}

\medskip
\noindent
Here is a list of natural questions and relevant remarks:  

\begin{question}
Is there a universal description of the stable symplectic or metaplectic category that allows us to check if a functor defined on symplectic manifolds extends to the stable category? 
Notice that if $\Fm(M)$ denotes any (stable) representation of $\mathbb{S}$, with $\Fm(\ast) := \Fm$ being an $\Omega$-module, then we have the action map for $\Fm$ on the level of spectra:
\[ q : \Omega(M, \ast) \wedge_{\Omega} \Fm(M) \longrightarrow \Fm. \] 
For compact manifolds $M$, we may dualize this map to get a natural transformation:
\[ \Fm(M) \longrightarrow \Omega (\ast, M) \wedge_{\Omega} \Fm. \]
Hence functors of the form $\Omega(\ast,M) \wedge_\Omega \Fm$ are terminal in the category of all functors defined on the (subcategory of compact objects in the) stable symplectic category.  
\end{question}

\begin{question} \label{GT}
One would like to describe the ``Motivic Galois group", by which we mean the rule that assigns to a commutative $\Omega$-algebra $\Fm$, its $\Fm$-points given by the group of multiplicative automorphisms of the monoidal functor on $\mathbb{S}$ (with values in the category of $\Fm$-modules), and which takes a symplectic manifold $N$ to $\Omega(\ast, N) \wedge_{\Omega} \Fm$. The author and J. Morava have studied this Motivic Galois group. We show in \cite{KM} that this group contains a natural subgroup which can be identified (over $\Q$), with a graded vector space with generators in degree $4k+2$, for $k \geq 0$. There is a striking similarity between this subgroup, and the abelian quotient of the Grothendieck-Teichm\"uller group that is known to act on deformation quantization \cite{KO}.

\end{question}

\begin{question}
Recall the homomorphism from the symplectomorphism group to the units $\GL s\Omega(M,M)$ given by taking a symplectomorphism to its graph:
\[ \gamma: \BSymp(M,\omega) \longrightarrow \BGL  s\Omega(M,M). \]
One may map $\BGL  s\Omega(M,M)$ into the Waldhausen K-theory of $s\Omega$, denoted by $\mbox{K}(s\Omega)$, and ask for this invariants of $\BSymp(M,\omega)$ with values in $\mbox{K}(s\Omega)$. In \cite{N,KM}, we give a complete description of this map rationally. In particular, we observe that $\pi_\ast(\mbox{K}(s\Omega))$ is a direct sum of a polynomial algebra on classes in degree $4n+2$, for $n \geq 0$ (related closely to the Motivic Galois group), and the graded vector space $\pi_\ast (\mbox{K}(\Z)) \otimes \Q$ which is detected by the zero section: $s\Omega \longrightarrow \mbox{H}(\Z)$. It would be very interesting to identify the geometry underlying the generators of $\pi_\ast(\mbox{K}(\Z)) \otimes \Q$. In particular one would like to know if the generator of $\pi_\ast(\mbox{K}(\Z)) \otimes \Q$ in degree $4n+1$ is related to $\pi_{4n} \BSymp_c(\C^k)$, for large $k$, where $\Symp_c(\C^k)$ denotes compactly supported symplectomorphisms?
\end{question}

\newpage

\pagestyle{empty}
\bibliographystyle{amsplain}
\providecommand{\bysame}{\leavevmode\hbox
to3em{\hrulefill}\thinspace}

\end{document}